\newcommand{\vect}[1]{\ensuremath{\boldsymbol{#1}}}
\newcommand{\mat}[1]{\ensuremath{\boldsymbol{#1}}}
\newtheorem{definition}{Definition}
\newtheorem{proposition}{Proposition}
\newtheorem{remark}{Remark}
\title{Robust Multivariate Estimation Based On Statistical Depth Filters}
\author[1]{Giovanni Saraceno, \thanks{Dipartimento di Matematica, Universit\`a degli studi di Trento, Via Sommarive 14, 38123 Povo(Trento), Italy. \\ Tel.: +39-3208918364
  \\ email: \texttt{giovanni.saraceno@unitn.it}}}
\author[1]{Claudio Agostinelli}
\affil[1]{Dipartimento di Matematica, Universit\`a degli studi di Trento}
\begin{document}

\maketitle

\begin{abstract}
In the classical contamination models, such as the gross-error (Huber and Tukey contamination model or Case-wise Contamination), observations are considered as the units to be identified as outliers or not. This model is very useful when the number of considered variables is moderately small. \citet{Alqallaf2009} shows the limits of this approach for a larger number of variables and introduced the Independent contamination model (Cell-wise Contamination) where now the cells are the units to be identified as outliers or not. One approach to deal, at the same time, with both type of contamination is filter out the contaminated cells from the data set and then apply a robust procedure able to handle case-wise outliers and missing values. Here we develop a general framework to build filters in any dimension based on statistical data depth functions. We show that previous approaches, e.g. \citet{Agostinelli2015a} and \citet{Zamar2017}, are special cases. We illustrate our method by using the half-space depth.

\noindent \textbf{Key Words}: Case-wise Contamination, Cell-wise Contamination, Filters, Robust Statistics, Statistical Data Depth Functions.

\smallskip

\noindent \textbf{Mathematics Subject Classification } 62G35 $\cdot$ 62G05

\end{abstract}

\section{Introduction}
\label{sec:introduction}

One of most common problem in real data is the presence of outliers, i.e. observations that are well separated from the bulk of data, that may be errors that affect the data analysis or can suggest unexpected information. According to the classical Tukey-Huber Contamination Model (THCM), a small fraction of rows can be contaminated and these are the units considered as outliers. Since the $1960$'s many methods have been developed in order to be less sensitive to such outlying observations. A complete introduction and explanation of the developments in robust statistics is given in the book by \citet{Maronna2006}.

In some application, e.g. in modern high-dimensional data sets, the entries of an observation (or cells) can be independently contaminated. \citet{Alqallaf2009} first formulated the Independent Contamination Model (ICM), taking into consideration this cell-wise contamination scheme. According to this paradigm, given a fraction $\epsilon$ of contaminated cells, the expected fraction of contaminated rows is
\begin{equation*}
  1 - (1-\epsilon)^p
\end{equation*}
which exceeds the $50\%$ breakdown point for increasing value of the contamination level $\epsilon$ and the dimension $p$. Traditional robust estimators may fail in this situation. Furthermore, \citet{Agostinelli2015b} showed that both type of outliers, case-wise and cell-wise, can occur simultaneously.

\citet{Gervini2002} introduced the idea of an adaptive univariate filter, identifying the proportion of outliers in the sample measuring the difference between the empirical distribution and a reference distribution. Then, it is used to compute an adaptive cutoff value, and finally a robust and efficient weighted least squares estimator is defined. Starting from this concept of outlier detection, \citet{Agostinelli2015a} introduced a two-step procedure: in the first step large cell-wise outliers are flagged by the univariate filter and replaced by NA's values \citep[a technique called snipping in][]{Farcomeni2014}; in the second step a Generalized S-Estimator \citep{Danilov2012} is applied to deal with case-wise outliers. The choice of using GSE is due to the fact that it has been specifically designed to cope with missing values in multivariate data. \citet{Zamar2017} improved this procedure proposing the following modifications:
\begin{itemize}
\item they combined the univariate filter with a bivariate filter to take into account the correlations among variables;
\item in order to handle also moderate cell-wise outliers, they proposed a filter as intersection between the univariate-bivariate filter and \textit{Detect Deviating Cells} (DDC), a filter procedure introduced by \citet{Rousseeuw2018};
\item finally, they constructed a Generalized Rocke S-estimator (GRE) replacing the GSE, to face the lost of robustness in case of high-dimensional case-wise outliers.
\end{itemize}

Here, we introduce a general idea of constructing filters in general dimension $d$, with $1 \le d \le p$, based on the statistical data depth functions, namely depth-filters. In particular, we show that the previously mentioned univariate-bivariate filter is a special case, if an appropriate statistical depth function is used.

  We develop one of these depth-filters using the half-space depth, HS-filter. Thus, we repropose the two steps procedure. In the first step, we apply the HS-filter taking $d=1$, $d=2$ and $d=p$, in sequence. As in \citet{Zamar2017}, the univariate and bivariate filters are combined in order to identify outlying cells which are replaced by NA's values. Note that, if $d = 1$, we filter the cell-wise outliers considering the variables as independent. Finally, the HS-filter with $d=p$ is performed on observations, so that, we can find undetected case-wise outliers. In the second step, the Generalized S-estimator is used. Therefore, we also took into account the improvements suggested by \citet{Zamar2017}. Indeed, we improved our procedure following such modifications.

The rest of the work is organized as follows. Section \ref{sec:filters} introduces the main idea on how to construct filters based on statistical depth functions. In Section \ref{sec:gerviniyohai}, we show that the filters used in \citet{Agostinelli2015a} and \citet{Zamar2017}, namely GY-filters, are special cases of our proposed depth-filter approach, that is, they can be written in terms of depth functions. In order to prove that, we introduce a statistical data depth function called Gervini-Yohai depth function and we prove that the filter based on this depth coincides with the GY-filter. In Section \ref{sec:halfspace}, as an important example we consider the filter obtained by using the half-space depth function and in subsections \ref{sec:pvariate1} we introduce the proposed strategy to mark observations/cells as outliers. Section \ref{sec:simulation} reports the results of a Monte Carlo experiment while Section \ref{sec:example} illustrates the features of our approach using a simulation example and a real data set. Concluding remarks are given in Section \ref{sec:conclusions}. In the Supplementary Material, Section SM--1 discusses the general properties that a statistical data depth function should satisfy. The derivation of the claim in Remark \ref{rem:gy-filter-half} is provided in Section SM--2. In Section SM--3, we prove that the general properties introduced in SM--1 hold for the Gervini-Yohai depth. Section SM--4 illustrates the univariate HS-filter with two-tails control and Section SM--5 contains full results of the Monte Carlo experiment. Finally, Section SM--6 reports the codes used for the simulation example and for the real data set.

\section{Filters based on Statistical Data Depth Function}
\label{sec:filters}

Let $\vect{X}$ be a $\mathbb{R}^d$-valued random variable and $F$ a continuous distribution function. For a point $\vect{x} \in \mathbb{R}^d$, we consider the statistical data depth of $\vect{x}$ with respect to $F$ be $d(\vect{x};F)$, where $d(\cdot,F)$ satisfies the four properties given in \citet{Liu1990} and \citet{Zuo2000a} and reported in Section SM--1 of the Supplementary Material. Given an independent and identically distributed sample $\vect{X}_1, \ldots, \vect{X}_n$ of size $n$, we denote by $\hat{F}_n(\cdot)$ its empirical distribution function and by $d(\vect{x}; \hat{F}_n)$ the sample depth. We assume that, $d(\vect{x}; \hat{F}_n)$ is a uniform consistent estimator of $d(\vect{x}; F)$, that is, 
\begin{equation*}
\sup_{\vect{x}} | d(\vect{x}; \hat{F}_n) - d(\vect{x}; F) | \stackrel{a.s.}{\rightarrow} 0 \qquad n \rightarrow \infty ,
\end{equation*}
a property enjoined by many statistical data depth functions, e.g., among others simplicial depth \citep{Liu1990} and half-space depth \citep{Donoho1992}. One important feature of the depth functions is the $\alpha$-depth trimmed region given by
\begin{equation*}
  R_\alpha(F) = \{ \vect{x} \in \mathbb{R}^d: d(\vect{x}; F) \ge \alpha\}.
\end{equation*}
For any $\beta \in [0,1]$, $R^\beta(F)$ will denote the smallest region $R_\alpha(F)$ that has probability larger than or equal to $\beta$ according to $F$. Throughout, subscripts and superscripts for depth regions are used for depth levels and probability contents, respectively. Let $C^\beta(F)$ be the complement in $\mathbb{R}^d$ of the set $R^\beta(F)$. Let $m = \max_{\vect{x}} d(\vect{x};F)$ be the maximum value of the depth (for simplicial depth $m \le 2^{-p}$, for half-space depth $m \le 1/2$).

Given a high order probability $\beta$, we define a filter of dimension $d$ based on
\begin{equation}
\label{general_definition}
d_n = \sup_{\vect{x} \in C^\beta(F)} \{ d(\vect{x}; \hat{F}_n) - d(\vect{x}; F) \}^+ ,
\end{equation}
where $\{a\}^+$ represents the positive part of $a$. Then, we mark as outliers all the $n_0 = \left\lfloor \frac{n d_n}{2 m} \right\rfloor$ observations with the smallest population depth (where $\lfloor a \rfloor$ is the largest integer less then or equal to $a$). Given a depth function $d(\cdot,F)$, a desired property is that $\frac{n_0}{n} \to 0$ as $n \to \infty$. We recall the definition of consistent filter.

\begin{definition}
 Consider a random sample $\vect{X}_1,\ldots,\vect{X}_n$, where $\vect{X}_i$ are generated by the distribution $F_0$ and some cells can be independently contaminated. Let $\mathcal{F}$ be a filter, a procedure that flags some cells as cell-wise outliers replacing them by NA's, and let $d_n$ be the proportion of cells flagged by the filter. A filter is said \textit{consistent} for a given distribution $F_0$ if asymptotically it will not flag any cell if the data come from the true distribution $F_0$. That is
  \begin{equation*}
    \lim_{n \to \infty} d_n \rightarrow 0 \quad a.s. \ [F_0]
  \end{equation*}
\end{definition}

Note that, a statistical depth function can assume values in $\mathbb{R}^+ \cup \{0\}$. Hence, in order to be sure that the value $d_n$ is a proportion, we need to normalize this value dividing by the maximum $m$ of the depth. Intuitively, we can understand that the proportion of contaminated observations cannot exceed the 50\% since, in this case, it would not be possible to distinguish between the underlying distribution of data and the contaminating distribution. So, in addition, we divide also by 2, so that the final proportion of flagged observations as outliers lies between 0 and $1/2$.

\begin{remark}
  \label{rem:gy-filter-half}
  We verified that the filter proposed by \citet{Zamar2017} has a similar property. In particular, the probability that $d_n \ge \frac{1}{2}$ goes to 0 as $n \to \infty$. The derivation of this result is showed in Section SM--2 of Supplementary Material.
\end{remark}

\section{Gervini-Yohai d-variate filter}
\label{sec:gerviniyohai}

In this Section, we are going to show that the filters introduced in \cite{Agostinelli2015a} and \citet{Zamar2017} are a special case of our general approach to construct filters, that is, they can be expressed in terms of a depth function. For this reason, we are going to define a new depth, namely Gervini-Yohai depth, as follows
\begin{equation*}
d_{GY}(\vect{t}, F, G) = 1 - G(\Delta(\vect{t},\vect{\mu}(F),\mat{\Sigma}(F))) ,
\end{equation*}
where $G$ is a continuous distribution function, $\vect{\mu}(F)$ and $\mat{\Sigma}(F)$ are the location and scatter matrix functionals and $\Delta(t, F) = \Delta(\vect{t}, \vect{\mu}(F), \mat{\Sigma}(F)) = (\vect{t} - \vect{\mu}(F))^\top \mat{\Sigma}(F)^{-1} (\vect{t} - \vect{\mu}(F))$ indicates the squared Mahalanobis distance. In the Supplementary Material, Section SM--3 shows that this is a proper statistical data depth function since it satisfies the four properties that characterize a depth function.

Let $\{ G_n \}_{n=1}^\infty$ be a sequence of discrete distribution functions that might depends on $\hat{F}_n$ and such that
\begin{equation}
  \label{eqn:glivenko-cantelli}
  \sup_{t} |G_n(t) - G(t)| \stackrel{a.s.}{\rightarrow} 0.
\end{equation}
We might define the finite sample version of the Gervini-Yohai depth as
\begin{equation*}
d_{GY}(\vect{t}, \hat{F}_n, G_n) = 1 - G_n(\Delta(\vect{t},\vect{\mu}(\hat{F}_n),\mat{\Sigma}(\hat{F}_n))).
\end{equation*}
However, for filtering purpose we will use two alternative definitions later on.
The use of $G_n$, that might depend on the data, instead of $G$, makes this sample depth semiparametric.

Let $j_1, \ldots, j_d$, $1 \le d \le p$, be an $d$-tuple of the integer numbers in $\{1, \ldots, p\}$ and, for easy of presentation, let $\vect{Y}_i = (X_{ij_1}, \ldots , X_{ij_d})$ be a sub-vector of dimension $d$ of $\vect{X}_i$. Consider a pair of initial location and scatter estimators
\begin{equation*}
 \vect{T}_{0n}^{(d)} =  \left (
				\begin{array}{ll}
				T_{0n,j_1} \\
 				\ldots \\
				T_{0n,j_d}
				\end{array}
			\right )
			\quad \mbox{ and } \quad 
        \mat{C}_{0n}^{(d)} =  \left (
				\begin{array}{lll}
				C_{0n,j_1j_1} & \ldots   & C_{0n,j_1j_d} \\
 				\ldots             &\ldots    &   \ldots            \\
				C_{0n,j_dj_1}& \ldots    & C_{0n,j_dj_d}
				\end{array}
			\right ) \ .
\end{equation*}
Now, define the squared Mahalanobis distance for a data point $\vect{Y}_i$ by $\Delta_i = \Delta(\vect{Y}_i, \hat{F}_n) = \Delta(\vect{Y}_i, \vect{T}_{0n}^{(d)}, \mat{C}_{0n}^{(d)})$. Consider $G$ the distribution function of a $\chi_d^2$, $H$ the distribution function of $\Delta = \Delta(\cdot, F)$ and let $\hat{H}_n$ be the empirical distribution function of $\Delta_i$ ($1 \le i \le n$). We consider two finite sample version of the Gervini-Yohai depth, i.e., 
\begin{equation*}
d_{GY}(\vect{t}, \hat{F}_n, G) = 1 - G(\Delta(\vect{t}, \hat{F}_n)) ,
\end{equation*}
and
\begin{equation*}
d_{GY}(\vect{t}, \hat{F}_n, \hat{H}_n) = 1 - \hat{H}_n(\Delta(\vect{t}, \hat{F}_n)) .
\end{equation*}
The proportion of flagged $d$-variate outliers is defined by
\begin{equation*}
d_n = \sup_{\vect{t} \in A} \{ d_{GY}(\vect{t}, \hat{F}_n, \hat{H}_n) - d_{GY}(\vect{t}, \hat{F}_n, G) \}^+ .
\end{equation*}
Here $A = \{ \vect{t} \in \mathbb{R}^d: d_{GY}(\vect{t}, F, G) \leq d_{GY}(\vect{\zeta}, F, G) \}$, where $\vect{\zeta}$ is any point in $\mathbb{R}^d$ such that $\Delta(\vect{\zeta}, F)  = \eta$ and $\eta = G^{-1}(\alpha)$ is a large quantile of $G$. Then, we flag $\lfloor nd_{n} \rfloor$ observations. It is easy to see that,
\begin{align*}
d_n & = \sup_{\vect{t} \in A} \{ [1 - \hat{H}_n(\Delta(\vect{t}, \hat{F}_n))] - [1 - G(\Delta(\vect{t}, \hat{F}_n))] \}^+ \\
& =  \sup_{\vect{t} \in A} \{ G(\Delta(\vect{t}, \hat{F}_n)) - \hat{H}_n(\Delta(\vect{t}, \hat{F}_n)) \}^+ \\
& = \sup_{\Delta \ge \eta} \{ G(\Delta) - \hat{H}_n(\Delta) \}^+
\end{align*}
since $d_{GY}$ is a non increasing function of the squared Mahalanobis distance of the point $\vect{t}$.

\begin{remark}
  \label{rem:gn}
  In principle, $G_n$ could be any sequence of discrete distributions and for this reason we require that it satisfies condition (\ref{eqn:glivenko-cantelli}). If $G_n$ coincides with the empirical distribution of $G$, indicated as $\hat{G}_n$, such condition holds for the Glivenko-Cantelli lemma.
\end{remark}

\begin{remark}
  \label{rem:mahalanobis depth}
  The Mahalanobis depth is defined as \citep{Zuo2000a}
  \begin{equation*}
MHD(\vect{x},F) = \left(1 + \Delta(\vect{x},\vect{\mu}(F),\mat{\Sigma}(F)\right)^{-1},\ x \in \mathbb{R}^d.
  \end{equation*}
  Note that, for a continuous distribution $F$, MHD is equivalent to the GY-depth. But the Mahalanobis depth, which is completely parametric, cannot be used in our approach to define filters.
\end{remark}

We can rephrase Proposition 2 in \citet{Zamar2017}, that states the consistency property of the filter, as follows.
\begin{proposition}
Consider a random vector $\vect{Y} = (X_{1}, \ldots, X_{d}) \sim F_0$ and a pair of location and scatter estimators $\vect{T}_{0n}$ and $\mat{C}_{0n}$ such that $\vect{T}_{0n} \rightarrow \vect{\mu}_0 = \vect{\mu}(F_0) \in \mathbb{R}^d$ and $\mat{C}_{0n} \rightarrow \mat{\Sigma}_0 = \mat{\Sigma}(F_0)$ a.s.. Consider any continuous distribution function $G$ and let $\hat{H}_n$ be the empirical distribution function of $\Delta_i$ and $H_0(t) = \Pr ((\vect{Y} - \vect{\mu}_0)^t \mat{\Sigma}_0^{-1}(\vect{Y} - \vect{\mu}_0) \le t )$. If the distribution $G$ satisfies: 
\begin{equation}
\label{equ:prop2}
\max_{\vect{t} \in A} \{ d_{GY}(\vect{t},F_0,H_0) - d_{GY}(\vect{t},F_0,G) \} \le 0 ,
\end{equation}
where $A = \{ \vect{t} \in \mathbb{R}^d: d_{GY}(\vect{t}, F_0, G) \leq d_{GY}(\vect{\zeta}, F_0, G) \}$, where $\vect{\zeta}$ is any point in $\mathbb{R}^d$ such that $\Delta(\vect{\zeta}, F_0)  = \eta$ and $\eta = G^{-1}(\alpha)$ is a large quantile of $G$, then
\begin{equation*}
\frac{n_0}{n} \rightarrow 0 \qquad \text{a.s.}
\end{equation*}
where
\begin{equation*}
n_0 = \lfloor nd_{n} \rfloor .
\end{equation*}
\end{proposition}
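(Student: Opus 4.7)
The plan is to show that $d_n \to 0$ almost surely, which immediately gives $n_0/n \le d_n \to 0$. Using the closed-form reduction already derived in the excerpt, it suffices to prove that
\begin{equation*}
\sup_{\Delta \ge \eta} \{ G(\Delta) - \hat{H}_n(\Delta) \}^+ \xrightarrow{a.s.} 0 .
\end{equation*}
The natural way forward is to compare $\hat{H}_n$ with $H_0$: if $\hat{H}_n \to H_0$ uniformly a.s., then the display converges to $\sup_{\Delta \ge \eta} \{ G(\Delta) - H_0(\Delta) \}^+$, which is $\le 0$ by hypothesis (\ref{equ:prop2}), since that condition, rewritten in terms of $G$ and $H_0$ through the defining formula of $d_{GY}$ and the monotonicity of $G, H_0$, states exactly that $G(\Delta) \le H_0(\Delta)$ for every $\Delta \ge \eta$ (the set $A$ being in one-to-one correspondence with $\{\Delta \ge \eta\}$ via the Mahalanobis map).

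The first step is therefore to establish the uniform almost-sure convergence $\hat{H}_n \to H_0$. I would compare $\hat{H}_n$, the empirical c.d.f.\ of the \emph{plug-in} Mahalanobis distances $\Delta_i = \Delta(\vect{Y}_i, \vect{T}_{0n}, \mat{C}_{0n})$, with the auxiliary empirical c.d.f.\ $\tilde{H}_n$ of the \emph{true} Mahalanobis distances $\tilde{\Delta}_i = \Delta(\vect{Y}_i, \vect{\mu}_0, \mat{\Sigma}_0)$. For $\tilde{H}_n$ the classical Glivenko-Cantelli theorem yields $\sup_t |\tilde{H}_n(t) - H_0(t)| \to 0$ a.s. To pass from $\tilde{H}_n$ to $\hat{H}_n$, note that $\vect{T}_{0n} \to \vect{\mu}_0$ and $\mat{C}_{0n} \to \mat{\Sigma}_0$ a.s., so on an event of probability one, for $n$ large, $\Delta_i$ and $\tilde{\Delta}_i$ differ by a quantity that can be bounded uniformly over $i$ on any compact set and by a stochastic tail control otherwise. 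Combined with the continuity of $H_0$ (guaranteed because $F_0$ is continuous and $\mat{\Sigma}_0$ is positive definite), this implies $\sup_t |\hat{H}_n(t) - \tilde{H}_n(t)| \to 0$ a.s., and the triangle inequality closes the argument.

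The second step is then routine: insert the uniform convergence into $d_n$ to obtain
\begin{equation*}
\limsup_{n \to \infty} d_n \le \sup_{\Delta \ge \eta} \{ G(\Delta) - H_0(\Delta) \}^+ \le 0 \quad \text{a.s.} ,
\end{equation*}
where the second inequality is (\ref{equ:prop2}) translated through the monotonicity of $d_{GY}$ in $\Delta$. Since $d_n \ge 0$ by construction, $d_n \to 0$ a.s., and $n_0/n = \lfloor n d_n \rfloor / n \to 0$ a.s. follows at once.

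The main obstacle is the perturbation step, i.e.\ controlling $\sup_i |\Delta_i - \tilde{\Delta}_i|$ well enough to transfer Glivenko-Cantelli from $\tilde{H}_n$ to $\hat{H}_n$. This is delicate because the perturbation is not uniform over all of $\mathbb{R}^d$ (quadratic forms blow up on tails), so the argument must either truncate at a large compact set, handle the tails through the uniform continuity of $H_0$ and Chebyshev-type bounds on $\#\{i : \|\vect{Y}_i\| > M\}/n$, or invoke an existing continuous mapping/Slutsky-style result for plug-in empirical processes. Everything after that is bookkeeping based on the closed form of $d_n$ and condition (\ref{equ:prop2}).
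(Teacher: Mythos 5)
Your proof follows essentially the same route as the paper's: the paper likewise reduces condition (\ref{equ:prop2}) to $\max_{\Delta \ge \eta}\{G(\Delta) - H_0(\Delta)\} \le 0$ via the closed form of $d_{GY}$ as a function of the squared Mahalanobis distance, and then delegates the remaining step---the a.s.\ uniform convergence of $\hat{H}_n$ to $H_0$ under plug-in estimates and the resulting bound $\limsup_n d_n \le \sup_{\Delta \ge \eta}\{G(\Delta) - H_0(\Delta)\}^+ \le 0$---to Proposition 2 of \citet{Zamar2017}, which is precisely the Glivenko--Cantelli-plus-perturbation argument you sketch. Your reconstruction of that deferred step, including the truncation and tail control needed to transfer Glivenko--Cantelli from the true to the plug-in Mahalanobis distances, is the standard argument and is sound.
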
 

\begin{proof}.
Note that
\begin{equation*}
d_{GY}(\vect{t},\hat{F}_n,\hat{H}_n) - d_{GY}(\vect{t},\hat{F}_n, G) = G(\Delta(\vect{t},\vect{T}_{0n},\mat{C}_{0n})) - \hat{H}_n(\Delta(\vect{t},\vect{T}_{0n},\mat{C}_{0n})) 
\end{equation*}		
and condition in equation (\ref{equ:prop2}) is equivalent to 
\begin{equation*}
\max_{\Delta \ge \eta} \{G(\Delta) - H_0(\Delta) \} \le 0 ,
\end{equation*}
The rest of the proof is the same as in Proposition 2 of \citet{Zamar2017}.
\qed
\end{proof}

\section{Filters based on Half-space Depth}
\label{sec:halfspace}

In this section, we are going to give an example of depth-filter considering the half-space depth $d_{HS}(\cdot,F)$. In particular, we will prove the consistency property for this case.

\begin{definition}[Half-space depth]
Let $\vect{X}$ be a $\mathbb{R}^d$-valued random variable and let $F$ be a distribution function. For a point $\vect{x} \in \mathbb{R}^d$, the half-space depth of $\vect{x}$ with respect to $F$ is defined as the minimum probability of all closed half-spaces including $\vect{x}$:
\begin{equation*}
d_{HS} (\vect{x};F) = \min_{H \in \mathcal{H}(\vect{x})} P_F(\vect{X} \in H).
\end{equation*}
where $\mathcal{H}(\vect{x})$ indicates the set of all half-spaces in $\mathbb{R}^d$ containing $\vect{x} \in \mathbb{R}^d$.
\end{definition}

Given an independent and identically distributed sample $\vect{X}_1, \ldots, \vect{X}_n$, we define the filter in general dimension $d$ introduced previously, where here we use the half-space depth, as
\begin{equation}
  \label{eqn:dn-halfspace}
d_n = \sup_{\vect{x} \in C^\beta(F)} \{ d_{HS}(\vect{x}; \hat{F}_n) - d_{HS}(\vect{x}; F) \}^+ ,
\end{equation}
where $\beta$ is a high order probability, $\hat{F}_n(\cdot)$ is the empirical distribution function and $F$ is a chosen reference distribution which might depends, according to the assumed models, on unknown parameters, as in the case of location and dispersion models. In this last case, initial location and dispersion estimators, $\vect{T}_{0n}$ and $\mat{C}_{0n}$, are needed. As usual, $n_0 = \lfloor n d_n/2m \rfloor = \lfloor n d_n \rfloor$ observations with the smallest population depth are marked as outliers. Let $F_0$ be the true distribution of $X$, i.e. $\vect{X} \sim F_0$. Note that, so far we have no conditions on $F_0$. Here, we will prove the consistency property of the HS-filter when $\vect{X}$ is elliptically symmetric distributed.

\begin{definition}
  A random vector $\vect{X} \in \mathbb{R}^d$ is said elliptically symmetric distributed, denoted by $\vect{X} \sim E_d(h_0, \vect{\mu}, \mat{\Sigma})$, if it has a density function given by
  \begin{equation*}
    f_0(\vect{x}) \propto |\mat{\Sigma}^{-1/2}| h_0((\vect{x}-\vect{\mu})^\top\mat{\Sigma}^{-1}(\vect{x}-\vect{\mu})) .
  \end{equation*}
  where the density generating function $h_0$ is a non-negative scalar function, $\vect{\mu}$ is the location parameter vector and $\mat{\Sigma}$ is a $d \times d$ positive definite matrix.
\end{definition}

Let $\vect{X} \sim E_d(h_0,\vect{\mu},\mat{\Sigma})$. Denote by $F_0$ its distribution function and by $\Delta_{\vect{x}} = (\vect{x} - \vect{\mu})^\top \mat{\Sigma}^{-1} (\vect{x} - \vect{\mu})$ the squared Mahalanobis distance of a $d$-dimensional point $\vect{x}$. By Theorem 3.3 of \citet{Zuo2000b}, if a depth $d(\cdot,\cdot)$ is affine equivariant (P1) and has maximum at $\vect{\mu}$ (P2) (see Supplementary Material - Section SM--1) then the depth is of the form $d(\vect{x}; F_0) = g(\Delta_{\vect{x}})$ for some non increasing function $g$. In this case, we can restrict ourselves, without loss of generality, to the case $\vect{\mu} = \vect{0}$ and $\mat{\Sigma} = \mat{I}$, where $\mat{I}$ is the identity matrix of dimension $d$. Under this setting, it is easy to see that the half-space depth of a given point $\vect{x}$ is given by
\begin{equation}
  \label{eqn:hs-elliptically}
  d_{HS}(\vect{x}; F_0) = 1 - F_{0,1}(\sqrt{\Delta_{\vect{x}}}),
\end{equation}
where $F_{0,1}$ is a marginal distribution of $\vect{X}$. Denoting the reference distribution by $F$, let $f \propto h(\Delta_x) $ be the corresponding density function. Note that, if the function $h$ is such that
\begin{equation}
  \label{eqn:h-condition}
\frac{h_0(\Delta_{\vect{x}})}{h(\Delta_{\vect{x}})} \rightarrow 0  \qquad \Delta_{\vect{x}} \rightarrow \infty ,
\end{equation}
then, there exists a $\Delta^\ast$ such that, for all $\vect{x}$ with $\Delta_{\vect{x}} > \Delta^\ast$
\begin{equation*}
  d_{HS}(\vect{x}; F) \ge d_{HS}(\vect{x}; F_0).
\end{equation*}
Hence,
\begin{equation*}
\sup_{\{\vect{x}: \Delta_{\vect{x}} > \Delta^\ast\}} [d_{HS}(\vect{x}; F_0) - d_{HS}(\vect{x}; F)] \le 0
\end{equation*}
and therefore, for all $\beta > 1 - 2 F_{0,1}(-\sqrt{\Delta^{\ast}})$,  
\begin{equation*}
\sup_{C^\beta(F)} [d_{HS}(\vect{x}; F_0) - d_{HS}(\vect{x}; F)] \le 0 .
\end{equation*}

In order to compute the value $d_n$, we have to identify the set $C^\beta(F) = \{ \vect{x} \in \mathbb{R}^p: d_{HS}(\vect{x},F) \le d_{HS}(\eta_\beta,F) \}$ where $\eta_\beta$ is such that the probability of $C^\beta(F)$ is equal to $1-\beta$. In case we use the normal distribution as reference distribution, that is $F = N(\vect{T}_{0n}, \mat{C}_{0n})$, then, by Corollary 4.3 in \citet{Zuo2000b}, the computation of $C^\beta(F)$ is particularly simple. In fact, denoting with $\Delta_x = (\vect{x} - \vect{T}_{0n})^\top \mat{C}_{0n}^{-1}(\vect{x} - \vect{T}_{0n})$ the squared Mahalanobis distance of $\vect{x}$ using the initial location and dispersion estimates, the set $C^\beta(F)$ can be rewritten as
\begin{equation}
  \label{eqn:cbeta}
  C^\beta(F) = \{ \vect{x} \in \mathbb{R}^p: \Delta_x \ge (\chi^2_d)^{-1}(\beta) \},
\end{equation}
where $(\chi^2_d)^{-1}(\beta)$ is a large quantile of a chi-squared distribution with $d$ degrees of freedom. Now, we can state the consistency property for the HS-filter.

\begin{proposition}
  \label{prop:consistency-halfspace}
  Consider a random vector $(\vect{X}_1, \ldots , \vect{X}_n) \sim F_0(\vect{\mu}_0, \mat{\Sigma}_0)$ and suppose that $F_0$ is an elliptically symmetric distribution. Also consider a pair of location and dispersion estimators $\vect{T}_{0n}$ and $\mat{C}_{0n}$ such that $\vect{T}_{0n} \rightarrow \vect{\mu}_0$ and $\mat{C}_{0n} \rightarrow \mat{\Sigma}_0$ a.s.. Let $F$ be a chosen reference distribution and $\hat{F}_n$ the empirical distribution function. Assume that $F(\vect{\mu},\mat{\Sigma})$ is continuous with respect to $\vect{\mu}$ and $\mat{\Sigma}$. If the reference distribution satisfies
  \begin{equation}
    \label{eqn:prop-assumption}
\sup_{\vect{x} \in C^\beta(F)} [d_{HS}(\vect{x}; F_0) - d_{HS}(\vect{x}; F)] \le 0 
\end{equation}
where $\beta$ is some large probability, then   
\begin{equation*}
\frac{n_0}{n} \rightarrow 0 \mbox{ as } n \rightarrow \infty
\end{equation*}
where $n_0 = \lfloor n d_n \rfloor$.
\end{proposition}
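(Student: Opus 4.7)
The plan is to show that $d_n \to 0$ almost surely; since $n_0/n = \lfloor n d_n\rfloor/n \le d_n$, this immediately yields the claim. To control $d_n$, I would insert $d_{HS}(\vect{x}; F_0)$ inside the braces of (\ref{eqn:dn-halfspace}) and use the subadditivity $\{A+B\}^+ \le |A| + \{B\}^+$ to bound
\[
d_n \le \sup_{\vect{x}} |d_{HS}(\vect{x}; \hat{F}_n) - d_{HS}(\vect{x}; F_0)| + \sup_{\vect{x} \in C^\beta(F)} \{d_{HS}(\vect{x}; F_0) - d_{HS}(\vect{x}; F)\}^+ .
\]
The first supremum tends to $0$ almost surely by the uniform strong consistency of the empirical half-space depth recalled in Section \ref{sec:filters} (Donoho--Gasko), since the sample is i.i.d.\ from $F_0$. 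Hence it suffices to show that the second supremum is asymptotically nonpositive.

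If $F$ did not depend on the data, hypothesis (\ref{eqn:prop-assumption}) would make that second term nonpositive and we would be done. In the location--dispersion setting one uses $F = F(\vect{T}_{0n}, \mat{C}_{0n})$, so I would combine the a.s.\ convergence $(\vect{T}_{0n}, \mat{C}_{0n}) \to (\vect{\mu}_0, \mat{\Sigma}_0)$ with the assumed continuity of $F(\vect{\mu},\mat{\Sigma})$ in its parameters. Together with the affine equivariance of the half-space depth, this gives uniform convergence of $d_{HS}(\cdot; F(\vect{T}_{0n}, \mat{C}_{0n}))$ to $d_{HS}(\cdot; F(\vect{\mu}_0, \mat{\Sigma}_0))$ on $\mathbb{R}^d$, and convergence of the boundary contour that defines $C^\beta(F(\vect{T}_{0n}, \mat{C}_{0n}))$ to that of $C^\beta(F(\vect{\mu}_0, \mat{\Sigma}_0))$. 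Taking $\limsup_n$ of the second supremum and applying (\ref{eqn:prop-assumption}) then yields $\limsup_n d_n \le 0$ a.s.; combined with $d_n \ge 0$ by construction, this forces $d_n \to 0$ a.s., completing the proof.

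The main obstacle is exactly this second step, because both the map $\vect{x}\mapsto d_{HS}(\vect{x};F)$ and the set $C^\beta(F)$ over which the supremum ranges are random through the initial estimators, so hypothesis (\ref{eqn:prop-assumption}) (stated at the population reference) cannot be invoked directly on the plug-in reference. The clean way to handle this is to use affine equivariance of the half-space depth to recenter and rescale by $(\vect{T}_{0n}, \mat{C}_{0n})$, which transfers the randomness in $F$ into a vanishing perturbation of the argument $\vect{x}$; the explicit expression (\ref{eqn:cbeta}) for the Gaussian reference shows that $C^\beta(F)$ is then an ellipsoidal complement whose defining contour is controlled by $\mat{C}_{0n}$ and thus varies continuously with the estimators. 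The rest of the argument mirrors the reasoning used in Proposition~2 of \citet{Zamar2017}.
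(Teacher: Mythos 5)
Your proposal is correct and follows essentially the same route as the paper's proof: the same decomposition into (i) $\sup_{\vect{x}}|d_{HS}(\vect{x};\hat{F}_n)-d_{HS}(\vect{x};F_0)|$, handled by the Donoho--Gasko uniform consistency, (ii) the term controlled by hypothesis (\ref{eqn:prop-assumption}), and (iii) the plug-in gap $d_{HS}(\cdot;F(\vect{\mu}_0,\mat{\Sigma}_0))-d_{HS}(\cdot;F(\vect{T}_{0n},\mat{C}_{0n}))$, handled by the assumed continuity of $F$ in its parameters together with the a.s.\ convergence of $(\vect{T}_{0n},\mat{C}_{0n})$. Your closing discussion of the randomness of $C^\beta(F)$ and the recentering via affine equivariance makes explicit a technicality that the paper's proof passes over silently, so your version is, if anything, slightly more careful on that point.
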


\begin{proof}.
In \citet{Donoho1992}, it is proved that for $\vect{X}_1, \vect{X}_2, ... , \vect{X}_n$ i.i.d. with distribution $F_0$, as $n \rightarrow \infty$
\begin{equation*}
\sup_{\vect{t} \in \mathbb{R}^d} |d_{HS}(\vect{t},F_0) - d_{HS}(\vect{t},\hat{F}_n)| \rightarrow 0   \mbox{   a.s.}
\end{equation*}
Note that, by the continuity of $F$, $F(\vect{T}_{0n}, \mat{C}_{0n}) \rightarrow F(\vect{\mu}_0, \mat{\Sigma}_0)$ a.s..
Hence, for each $\varepsilon > 0$ there exists $n^\ast$ such that for all $n > n^\ast$ we have
\begin{align*}
  \sup_{\vect{x} \in C^\beta(F)} \{ d_{HS}(\vect{x}; & \hat{F}_n) - d_{HS}(\vect{x}; F(\vect{T}_{0n}, \mat{C}_{0n})) \} \le \\
  & \sup_{\vect{x} \in C^\beta(F)} \{ d_{HS}(\vect{x}; \hat{F}_n) - d_{HS}(\vect{x}; F_0(\vect{\mu}_0, \mat{\Sigma}_0)) \} + \\
															& \sup_{\vect{x} \in C^\beta(F)} \{ d_{HS}(\vect{x}; F_0(\vect{\mu}_0, \mat{\Sigma}_0)) - d_{HS}(\vect{x}; F(\vect{\mu}_0, \mat{\Sigma}_0)) \} + \\
															& \sup_{\vect{x} \in C^\beta(F)} \{ d_{HS}(\vect{x}; F(\vect{\mu}_0, \mat{\Sigma}_0)) - d_{HS}(\vect{x}; F(\vect{T}_{0n}, \mat{C}_{0n})) \}  \\
															\le    & \frac{\varepsilon}{2} + 0 + \frac{\varepsilon}{2} = \varepsilon
\end{align*}
which implies that $d_n = \sup_{\vect{x} \in C^\beta(F)} \{ d_{HS}(\vect{x}; \hat{F}_n) - d_{HS}(\vect{x}; F(\vect{T}_{0n}, \mat{C}_{0n})) \}^+$ goes to zero as $n \to \infty$. Hence, $\frac{n_0}{n} \to 0$ as $n \to \infty$.
\qed
\end{proof}

\begin{remark}
  \label{rem:normal-reference}
  We showed that if condition (\ref{eqn:h-condition}) holds, then assumption (\ref{eqn:prop-assumption}) of Proposition \ref{prop:consistency-halfspace} is satisfied. In other words, even if the actual distribution is unknown, asymptotically, the filter will not wrongly flag any outlier when the tail of the chosen reference distribution are heavier than that of the actual distribution. In case $F$ coincides to $F_0$, assumption \ref{eqn:prop-assumption} is clearly satisfied. We suggest to use for $F$ the same distribution assumed for the model of the data.
\end{remark}

\begin{remark}
  \label{rem:init-estimator}
When the underlying $F_0$ distribution is elliptical, a natural choice for $T_{0n}$ and $C_{0n}$ is as follows. For an univariate filter, $d = 1$, $T_{0n}$ and $C_{0n}$ might be for example the median and the MAD. In our study, when $d > 1$, as $\vect{T}_{0n}$ and $\mat{C}_{0n}$ we adopted the observation with maximum half-space depth, since the half-space depth corresponds to a generalization of the median in multivariate space, and the estimate given by a generalized S-estimator, respectively. Notice that, these initial estimates satisfy the almost sure convergence assumption, under the nominal model $F_0$.
\end{remark}

In Section SM--4 of the Supplementary Material we added an example which illustrates the filter based on half-space depth for $d=1$. In this case, it is possible to control independently the left and the right tail of the distribution and equation (\ref{eqn:dn-halfspace}) assumes a simpler form. However, in our implementation, we always use the general formulation that does not make this distinction.

On the other hand, the computation of the sample half-space depth is demanding for $d > 1$, even in low dimensions, since it is based on all possible one-dimensional projections. Here, we decided to use the random Tukey depth introduced by \citet{Cuesta2008}, a random approximation of the exact sample half-space depth, implemented in the \texttt{R} \citep{cran} package \texttt{ddalpha} \citep{Lange2012}. The reason is that approximate algorithms seem to be promising and, as pointed out in \citet{Cuesta2008}, they may outperform exact algorithms in terms of computational time. Note that, the random Tukey depth is able to handle also the case $d=50$, even if the computational time slightly increases. More information about exact algorithms can be found in \citep{Dyckerhoff2016}. These algorithms allow the exact computation of half-space depth for moderate dimensions and sample sizes.

\subsection{A consistent univariate, bivariate and $p$-variate filter}
\label{sec:pvariate1}

Consider a sample $\vect{X}_1, \ldots, \vect{X}_n$ where $\vect{X}_i \in \mathbb{R}^p, i = 1, \ldots, n$. In this subsection, we describe a filtering procedure which consists in applying the $d$-dimensional HS-filter given in equation (\ref{eqn:dn-halfspace}) three times in sequence, using $d=1$, $d=2$ and $d=p$.

We first apply the univariate filter to each variable separately. Let $\vect{X}^{(j)} = \{X_{1j},\ldots,X_{nj}\}$, $j=1,\ldots,p$, be a single variable. The univariate filter will flag $\lfloor n d_{nj} \rfloor$ observations as outliers, where $d_{nj}$ is as in equation (\ref{eqn:dn-halfspace}), and these values are replaced by  NA's values. Note that, the initial location and variance estimators used here are the median and the MAD of $\vect{X}^{(j)}$. Filtered data are indicated through an auxiliary matrix $\mat{U}$ of zeros and ones, with zero corresponding to a NA value.

Next, we identify the bivariate outliers by iterating the filter over all possible pairs of variables. Consider a pair of variables $\mat{X}^{(jk)} = \{ (X_{ij},X_{ik}), i = 1, \ldots, n \}$. The initial location and dispersion estimators are, respectively, the observation with maximum depth and the $2 \times 2$ covariance matrix estimate $S$ computed by the generalized S-estimator on non-filtered data \textbf{$\mat{X}^{(jk)}$}. For bivariate points with no flagged components by the univariate filter, we apply the bivariate filter. Given the pair of variables $\mat{X}^{(jk)}$, $1 \le j < k \le p$, we compute the value $d_{n}^{(jk)}$ given in equation (\ref{eqn:dn-halfspace}). In particular, to compute the sample depth $d_{HS}(\cdot,\hat{F}_n)$ we use the random Tukey depth, as mentioned before, through the function \texttt{depth.halfspace} implemented in the \texttt{R}  package \texttt{ddalpha} \citep{Lange2012}.

Then, $n_0^{(jk)}$ couples will be identified as bivariate outliers. But, at the end, we want to identify the cells $(i,j)$ which have to be flagged as cell-wise outliers. The procedure used for this purpose is described in \citet{Zamar2017} and reported here. Let
\begin{equation*}
J = \{ (i,j,k) : (X_{ij},X_{ik}) \mbox{ is flagged as bivariate outlier} \}
\end{equation*}
be the set of triplets which identifies the pairs of cells flagged by the bivariate filter where $i = 1, \ldots, n$ indicates the row. For each cell $(i,j)$ in the data, we count the number of flagged pairs in the $i$-th row in which the considered cell is involved:
\begin{equation*}
m_{ij} = \#\{ k : (i,j,k) \in J\}.
\end{equation*}
In absence of contamination, $m_{ij}$ follows approximately a binomial distribution $Bin(\sum_{k \not = j}\mat{U}_{jk},\delta)$ where $\delta$ represents the overall proportion of cell-wise outliers undetected by the univariate filter. Hence, we flag the cell $(i,j)$ if $m_{ij} > c_{ij}$, where $c_{ij}$ is the $0.99$-quantile of $Bin(\sum_{k \not = j}\mat{U}_{jk},0.1)$.

Finally, we perform the $p$-variate filter to the full data matrix. Detected observations (rows) are directly flagged as $p$-variate (case-wise) outliers. We denote the procedure based on univariate, bivariate and $p$-variate filters as HS-UBPF.

\subsection{A  sequencing filtering procedure}
\label{sec:pvariate2}

Suppose we would like to apply a sequence of $k$ filters with different dimension $1 \le d_1 < d_2 < \ldots < d_k \le p$. For each $d_i$, $i = 1, \ldots, k$, the filter updates the data matrix adding NA values to the $d_i$-tuples identified as $d_i$-variate outliers. In this way, each filter applies only those $d_i$-tuples that have not been flagged as outliers by the filters with lower dimension.

Initial values for each procedures rather than $d_1$ would be obtained by using the observation with the maximum half-space depth for location and the estimate given by GSE for the scatter matrix.

This procedure aims to be a valid alternative to that used in the presented HS-UBPF filter to perform a sequence of filters with different dimensions. However, this is a preliminary idea, indeed it has not been implemented yet.

\section{Monte Carlo results}
\label{sec:simulation}

We performed a Monte Carlo simulation to assess the performance of the proposed filter based on half-space depth. After the filter flags the outlying observations, the generalized S-estimator is applied to the data with added missing values. Part of our simulation study is based on the same setup described in \citet{Zamar2017} since it seems a good choice to test our filter in presence of contamination and the comparison with previous methods is easier. In particular, we compare the filter introduced in \citet{Agostinelli2015a} (indicated as GY-UF in case of univariate filter and GY-UBF for univariate and bivariate filter) and the same filter with the improvements proposed in \citet{Zamar2017} (indicated here as GY-UBF-DDC-C) to the presented filter based on statistical data depth functions obtained using the half-space depth (HS-UF for the univariate filter, HS-UBF for the univariate-bivariate filter, HS-UBPF for the univariate-bivariate-$p$-variate filter and HS-UBPF-DDC-C for the combination of the HS-UBPF with the modifications in \citet{Zamar2017}). The already existing filters are implemented in the \texttt{R} \citep{cran} package \texttt{GSE} \citep{Leung2015}, whereas the \texttt{R} code for the proposed filter based on half-space depth is available in the \texttt{R} package \texttt{GSEdepth} provided as supplementary material.

We considered samples from a $N_p(\vect{0}, \mat{\Sigma}_0)$, where all values in  $diag(\mat{\Sigma}_0)$ are equal to $1$, $p = 10, 20, 30, 40, 50$ and the sample size is $n = 10p$. Since our model is the normal distribution, we choose the normal distribution as reference distribution. We consider the following scenarios:
\begin{itemize}
\item Clean data: data without changes.
\item Cell-Wise contamination: a proportion $\epsilon$ of cells in the data is replaced by $X_{ij} \sim N(k,0.1^2)$, where $k = 1, \ldots, 10$.
\item Case-Wise contamination: a proportion $\epsilon$ of cases in the data is replaced by $\vect{X}_i \sim 0.5N(c\vect{v},0.1^2\mat{I})\ +\ 0.5N(-c\vect{v},0.1^2\mat{I})$, where $c = \sqrt{k(\chi^2_p)^{-1}(0.99)}$, $k = 2, 4, \ldots,100$ and $\vect{v}$ is the eigenvector corresponding to the smallest eigenvalue of $\mat{\Sigma}_0$ with length such that $(\vect{v}-\vect{\mu}_0)^\top\mat{\Sigma}_0^{-1}(\vect{v}-\vect{\mu}_0) = 1$.
  \item Mixed contamination: case-wise and cell-wise contamination are introduced at the same time (after replacing a proportion of cases, a proportion of the remaining cells is contaminated).
\end{itemize}

The proportions of contaminated rows chosen for case-wise contamination are $\epsilon = 0.1, 0.2$, and $\epsilon = 0.02,0.05, 0.1$ for cell-wise contamination. For the mixed contamination, we combined the proportions $\epsilon = 0.05, 0.1$ and $\epsilon = 0.02, 0.05$ for case-wise and cell-wise contamination, respectively. Finally, we tested the behaviour of the procedure for increasing $n$. We considered $p=5$ variables and $n = (10p, 50p, 100p)$ observations. Case-wise contamination and cell-wise contamination scenarios, as explained above, were performed on this setting. The number of replicates in our simulation study is $N=200$.

We measure the performance of a given pair of location and scatter estimators $\hat{\vect{\mu}}$ and $\hat{\mat{\Sigma}}$ using the mean squared error (MSE) and the likelihood ratio test (LRT) distance:
\begin{align*}
& MSE = \frac{1}{N}\sum_{i=1}^N (\hat{\vect{\mu}}_i - \vect{\mu}_0)^\top (\hat{\vect{\mu}}_i - \vect{\mu}_0) \\  
& LRT(\hat{\mat{\Sigma}},\mat{\Sigma}_0) = \frac{1}{N}\sum_{i=1}^N D(\hat{\mat{\Sigma}}_i,\mat{\Sigma}_0)
\end{align*}
where $\hat{\mat{\Sigma}}_i$ is the estimate of the $i$-th replication and $D(\mat{\Sigma},\mat{\Sigma}_0)$ is the Kullback-Leibler divergence between two Gaussian distributions with the same mean and variances $\mat{\Sigma}$ and $\mat{\Sigma}_0$. Finally, we computed the maximum average LRT distances and maximum average MSE considering all contamination values $k$.

\begin{table}
\begin{adjustbox}{width=\columnwidth,center}    
\begin{tabular}{rr|rr|rr|r|rr|r|}
  \hline
  &  & \multicolumn{2}{c|}{UF} & \multicolumn{2}{c|}{UBF} &  & \multicolumn{2}{c|}{DDC-C} & \\
$p$ & $\epsilon$ & GY & HS & GY & HS & HS-UBPF & GY-UBF & HS-UBPF & MLE \\
  \hline
10 & 0 & 0.8 & 0.7 & 0.9 & 0.7 & 0.8 & 1.0 & 1.0 & 0.6 \\ 
   & 0.02 & 1.2 & 1.1 & 1.3 & 1.1 & 1.1 & 1.1 & 1.1 & 113.0 \\ 
   & 0.05 & 4.6 & 4.8 & 4.6 & 4.9 & 4.8 & 2.4 & 2.5 & 290.5 \\ 
   & 0.1 & 16.4 & 16.7 & 16.4 & 16.9 & 16.8 & 13.3 & 13.2 & 555.3 \\ 
20 & 0 & 1.3 & 1.2 & 1.4 & 1.3 & 1.3 & 1.8 & 1.8 & 1.1 \\ 
   & 0.02 & 3.9 & 3.8 & 4.2 & 4.0 & 3.8 & 2.5 & 2.5 & 146.4 \\ 
   & 0.05 & 11.0 & 11.3 & 11.3 & 11.6 & 11.4 & 8.2 & 8.3 & 380.8 \\ 
   & 0.1 & 24.4 & 24.6 & 24.5 & 25.1 & 24.7 & 21.6 & 21.8 & 742.7 \\ 
30 & 0 & 1.9 & 1.8 & 2.0 & 1.9 & 1.9 & 3.4 & 3.4 & 1.6 \\ 
   & 0.02 & 6.0 & 5.8 & 6.5 & 6.1 & 5.8 & 5.0 & 5.1 & 179.5 \\ 
   & 0.05 & 14.5 & 14.7 & 15.1 & 15.3 & 14.9 & 13.4 & 13.4 & 470.5 \\ 
   & 0.1 & 30.5 & 30.6 & 30.5 & 31.4 & 31.0 & 31.1 & 31.5 & 930.4 \\ 
40 & 0 & 2.4 & 2.3 & 2.6 & 2.4 & 2.5 & 5.8 & 5.8 & 2.1 \\ 
   & 0.02 & 7.5 & 7.4 & 8.2 & 7.8 & 7.4 & 9.2 & 9.2 & 213.2 \\ 
   & 0.05 & 17.4 & 17.7 & 18.1 & 18.3 & 17.9 & 20.0 & 20.1 & 565.0 \\ 
   & 0.1 & 35.6 & 35.7 & 35.6 & 36.5 & 36.1 & 41.4 & 42.4 & 1117.5 \\ 
50 & 0 & 2.9 & 2.8 & 3.1 & 3.0 & 3.0 & 5.1 & 5.0 & 2.6 \\ 
   & 0.02 & 8.8 & 8.6 & 9.7 & 9.1 & 8.8 & 12.2 & 12.3 & 245.7 \\ 
   & 0.05 & 19.9 & 20.1 & 20.8 & 21.0 & 20.7 & 24.5 & 24.5 & 653.0 \\ 
   & 0.1 & 40.0 & 40.1 & 40.0 & 41.0 & 40.6 & 44.7 & 44.3 & 1291.1 \\ 
   \hline
\end{tabular}
\end{adjustbox}
\caption{Maximum average LRT distance under cell-wise contamination.} 
\label{tab:max_lrt_cell}
\end{table}

\begin{figure}
\begin{center}
\includegraphics[width=0.49\textwidth]{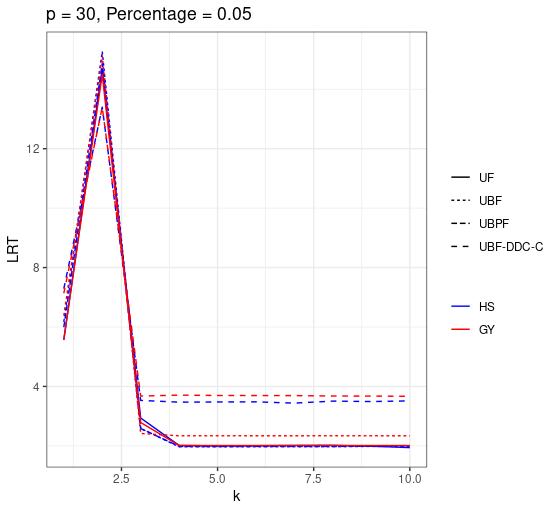}
\includegraphics[width=0.5\textwidth]{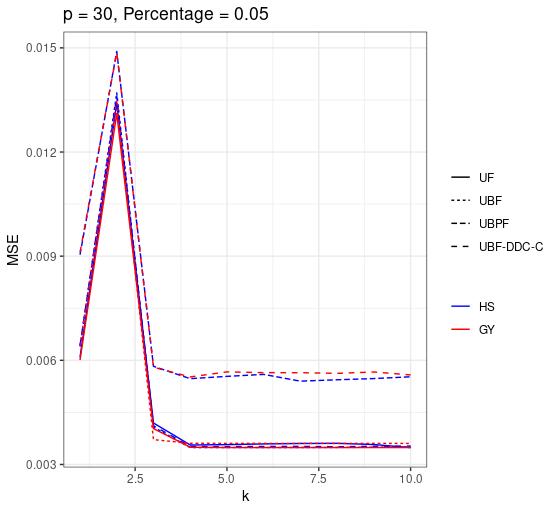}
\end{center}
\caption{Average LRT (left) and average MSE (right) versus the contamination value $k$, for $5\%$ cell-wise contamination level and $p=30$}
\label{fig:k-30-icm-0.05}
\end{figure}

Table \ref{tab:max_lrt_cell} shows the maximum average LRT distances under cell-wise contamination.  The univariate and univariate-bivariate filters have a similar behaviour, while HS-UBPF has a lightly better performance. GY-UBF-DDC-C and HS-UBPF-DDC-C have lower maximum average LRT distances if the number of variables is not large, but their LRT distances are higher with respect to the other filters for large $k$. This behavior is shown in Figure \ref{fig:k-30-icm-0.05} (left) where the average LRT distances versus different contamination values are displayed, with $5\%$ of cell-wise contamination level and $p=30$.

\begin{table}
\begin{adjustbox}{width=\columnwidth,center}
\begin{tabular}{rr|rr|rr|r|rr|r|}
  \hline
  &  & \multicolumn{2}{c|}{UF} & \multicolumn{2}{c|}{UBF} &  & \multicolumn{2}{c|}{DDC-C}& \\
$p$ & $\epsilon$ & GY & HS & GY & HS & HS-UBPF & GY-UBF & HS-UBPF & MLE \\ 
  \hline
10 & 0 & 0.8 & 0.7 & 0.9 & 0.7 & 0.8 & 1.0 & 1.0 & 0.6 \\ 
   & 0.1 & 9.8 & 7.6 & 14.9 & 8.5 & 6.2 & 3.5 & 3.4 & 893.9 \\ 
   & 0.2 & 93.0 & 79.6 & 161.1 & 120.1 & 77.1 & 18.7 & 17.5 & 1593.6 \\ 
20 & 0 & 1.3 & 1.2 & 1.4 & 1.3 & 1.3 & 1.8 & 1.8 & 1.1 \\ 
   & 0.1 & 25.7 & 21.2 & 38.1 & 27.2 & 26.0 & 6.8 & 6.9 & 894.1 \\ 
   & 0.2 & 368.0 & 322.3 & 428.9 & 441.0 & 373.8 & 19.6 & 20.1 & 1593.8 \\ 
30 & 0 & 1.9 & 1.8 & 2.0 & 1.9 & 1.9 & 3.4 & 3.4 & 1.6 \\ 
   & 0.1 & 50.8 & 44.9 & 64.0 & 70.3 & 68.6 & 9.0 & 8.7 & 895.0 \\ 
   & 0.2 & 745.8 & 708.7 & 620.0 & 744.2 & 751.3 & 17.1 & 17.6 & 1595.1 \\ 
40 & 0 & 2.4 & 2.3 & 2.6 & 2.4 & 2.5 & 5.8 & 5.8 & 2.1 \\ 
   & 0.1 & 64.2 & 89.8 & 97.0 & 70.7 & 67.7 & 16.2 & 16.3 & 898.0 \\ 
   & 0.2 & 1156.9 & 1112.1 & 852.0 & 1078.4 & 1088.0 & 22.7 & 21.4 & 1600.2 \\ 
50 & 0 & 2.9 & 2.8 & 3.1 & 3.0 & 3.0 & 5.1 & 4.8 & 2.6 \\ 
   & 0.1 & 175.2 & 215.6 & 123.3 & 156.6 & 163.9 & 30.5 & 29.9 & 898.0 \\ 
   & 0.2 & 1528.8 & 1468.0 & 1081.6 & 1354.5 & 1364.5 & 21.2 & 20.1 & 1599.9 \\ 
   \hline
\end{tabular}
\end{adjustbox}
\caption{Maximum average LRT distance under case-wise contamination.} 
\label{tab:max_lrt_case}
\end{table}

\begin{figure}
\begin{center}
\includegraphics[width=0.8\textwidth]{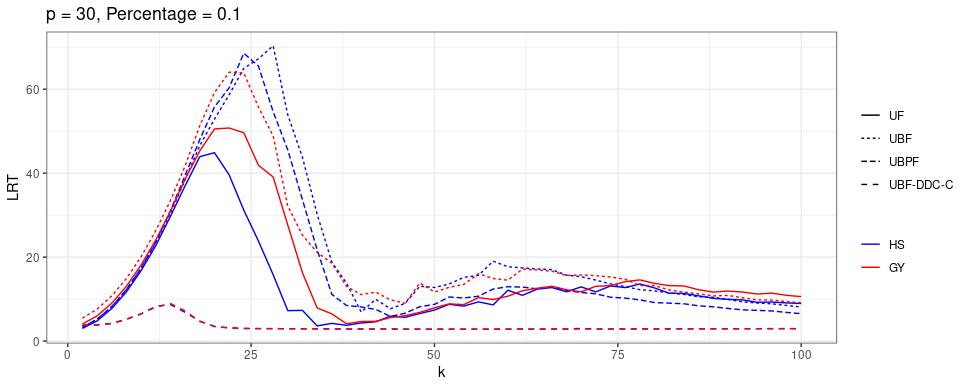}
\includegraphics[width=0.8\textwidth]{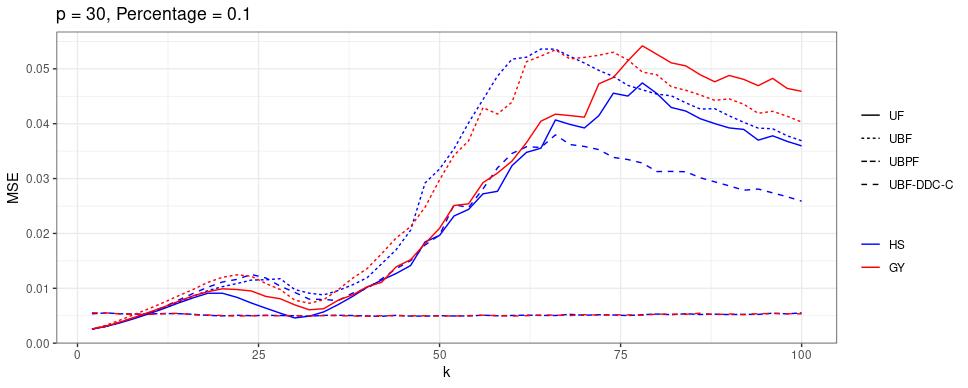}
\end{center}
\caption{Average LRT (top) and average MSE (bottom) versus the contamination value $k$, for $10\%$ case-wise contamination level and $p=30$}
\label{fig:k-30-thcm-0.1}
\end{figure}

Table \ref{tab:max_lrt_case} shows the maximum average LRT distances under case-wise contamination. Overall, the GY-UBP-DDC-C and HS-UBPF-DDC-C outperform all the other filters obtaining better results. An illustration of their behavior is given in Figure \ref{fig:k-30-thcm-0.1} (top) which shows the average LRT distances for different values of $k$, with $10\%$ of case-wise contamination level and $p=30$.

\begin{table}
\begin{adjustbox}{width=\columnwidth,center}
\begin{tabular}{rr|rr|rr|r|rr|r|}
  \hline
  &  & \multicolumn{2}{c|}{UF} & \multicolumn{2}{c|}{UBF} &  & \multicolumn{2}{c|}{DDC-C} & \\
$p$ & $\epsilon$ & GY & HS & GY & HS & HS-UBPF & GY-UBF & HS-UBPF & MLE\\
  \hline
10 & 0 & 1.1 & 1.1 & 1.1 & 1.1 & 1.1 & 1.3 & 1.3 & 1.0 \\ 
   & 0.02 & 1.3 & 1.3 & 1.3 & 1.3 & 1.3 & 1.5 & 1.5 & 6.8 \\ 
   & 0.05 & 1.9 & 2.0 & 2.0 & 2.0 & 2.0 & 2.0 & 2.0 & 30.2 \\ 
   & 0.1 & 4.8 & 4.9 & 4.8 & 4.9 & 4.9 & 5.0 & 5.0 & 109.2 \\ 
20 & 0 & 0.5 & 0.5 & 0.5 & 0.5 & 0.5 & 0.7 & 0.7 & 0.5 \\ 
   & 0.02 & 0.7 & 0.7 & 0.7 & 0.7 & 0.7 & 0.8 & 0.8 & 5.4 \\ 
   & 0.05 & 1.5 & 1.5 & 1.5 & 1.5 & 1.5 & 1.6 & 1.6 & 27.8 \\ 
   & 0.1 & 4.4 & 4.5 & 4.5 & 4.6 & 4.6 & 4.6 & 4.7 & 104.7 \\ 
30 & 0 & 0.3 & 0.3 & 0.4 & 0.3 & 0.4 & 0.6 & 0.6 & 0.3 \\ 
   & 0.02 & 0.5 & 0.5 & 0.5 & 0.5 & 0.5 & 0.7 & 0.7 & 4.9 \\ 
   & 0.05 & 1.3 & 1.3 & 1.3 & 1.4 & 1.4 & 1.5 & 1.5 & 26.8 \\ 
   & 0.1 & 4.3 & 4.3 & 4.3 & 4.4 & 4.4 & 4.5 & 4.7 & 103.2 \\ 
40 & 0  & 0.3 & 0.3 & 0.3 & 0.3 & 0.3 & 0.6 & 0.6 & 0.2 \\ 
   & 0.02 & 0.4 & 0.4 & 0.5 & 0.4 & 0.4 & 0.7 & 0.7 & 4.7 \\ 
   & 0.05 & 1.3 & 1.3 & 1.3 & 1.3 & 1.3 & 1.5 & 1.6 & 26.4 \\ 
   & 0.1 & 4.3 & 4.3 & 4.3 & 4.4 & 4.4 & 4.5 & 4.6 & 102.5 \\ 
50 & 0 & 0.2 & 0.2 & 0.2 & 0.2 & 0.2 & 0.4 & 0.3 & 0.2 \\ 
   & 0.02 & 0.4 & 0.4 & 0.4 & 0.4 & 0.4 & 0.6 & 0.6 & 4.6 \\ 
   & 0.05 & 1.2 & 1.2 & 1.2 & 1.3 & 1.3 & 1.4 & 1.4 & 26.1 \\ 
   & 0.1 & 4.2 & 4.2 & 4.2 & 4.4 & 4.3 & 4.3 & 4.5 & 101.9 \\
  \hline
\end{tabular}
\end{adjustbox}
\caption{Maximum average MSE distance under cell-wise contamination.} 
\label{tab:max_mse_cell}
\end{table}

\begin{table}
\begin{adjustbox}{width=\columnwidth,center}
\begin{tabular}{rr|rr|rr|r|rr|r|}
  \hline
  &  & \multicolumn{2}{c|}{UF} & \multicolumn{2}{c|}{UBF} &  & \multicolumn{2}{c|}{DDC-C} & \\
$p$ & $\epsilon$ & GY & HS & GY & HS & HS-UBPF & GY-UBF & HS-UBPF & MLE \\
\hline
10 & 0 & 1.1 & 1.1 & 1.1 & 1.1 & 1.1 & 1.3 & 1.3 & 1.0 \\ 
   & 0.1 & 2.8 & 2.5 & 3.2 & 2.9 & 1.9 & 1.9 & 1.9 & 21.8 \\ 
   & 0.2 & 15.1 & 14.2 & 20.1 & 16.1 & 9.7 & 2.5 & 2.8 & 84.4 \\ 
20 & 0 & 0.5 & 0.5 & 0.5 & 0.5 & 0.5 & 0.7 & 0.7 & 0.5 \\ 
   & 0.1 & 3.5 & 2.9 & 4.2 & 4.0 & 2.7 & 0.8 & 0.8 & 10.8 \\ 
   & 0.2 & 28.6 & 25.8 & 34.1 & 25.9 & 21.3 & 1.3 & 1.2 & 41.9 \\ 
30 & 0 & 0.3 & 0.3 & 0.4 & 0.3 & 0.4 & 0.6 & 0.6 & 0.3 \\ 
   & 0.1 & 5.4 & 4.7 & 5.3 & 5.4 & 3.8 & 0.6 & 0.6 & 7.1 \\ 
   & 0.2 & 50.6 & 46.7 & 37.2 & 46.5 & 48.0 & 0.8 & 0.8 & 27.6 \\ 
40 & 0 & 0.3 & 0.3 & 0.3 & 0.3 & 0.3 & 0.6 & 0.6 & 0.2 \\ 
   & 0.1 & 7.1 & 6.6 & 6.1 & 6.4 & 4.7 & 0.5 & 0.5 & 5.3 \\ 
   & 0.2 & 41.6 & 38.1 & 34.7 & 38.9 & 39.7 & 0.7 & 0.7 & 20.6 \\ 
50 & 0 & 0.2 & 0.2 & 0.2 & 0.2 & 0.2 & 0.4 & 0.3 & 0.2 \\ 
   & 0.1 & 7.9 & 7.6 & 6.3 & 6.2 & 5.0 & 0.5 & 0.5 & 4.3 \\ 
   & 0.2 & 32.4 & 30.0 & 30.6 & 31.9 & 32.5 & 0.5 & 0.5 & 16.5 \\ 
\hline
\end{tabular}
\end{adjustbox}
\caption{Maximum average MSE distance under case-wise contamination.} 
\label{tab:max_mse_case}
\end{table}

\begin{figure}
\begin{center}
\includegraphics[width=0.9\textwidth]{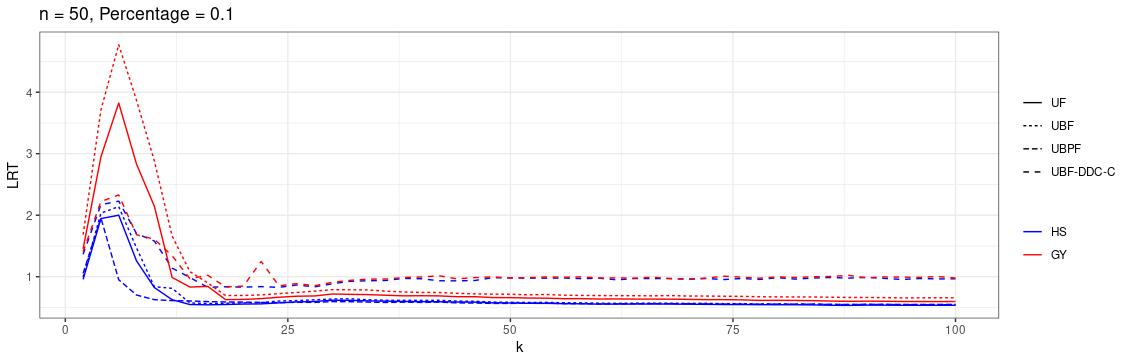}
\includegraphics[width=0.9\textwidth]{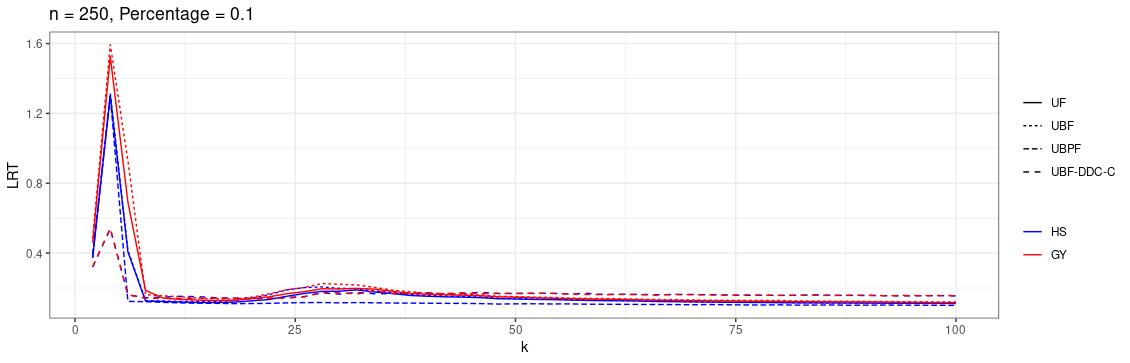}
\includegraphics[width=0.9\textwidth]{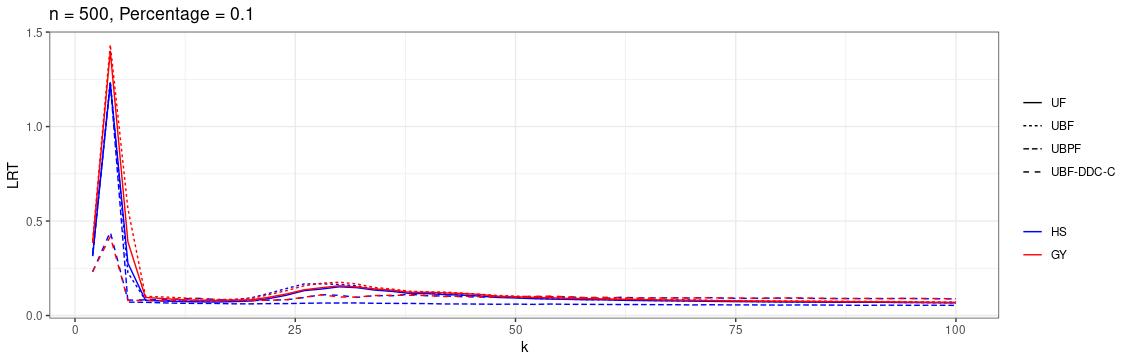}
\includegraphics[width=0.9\textwidth]{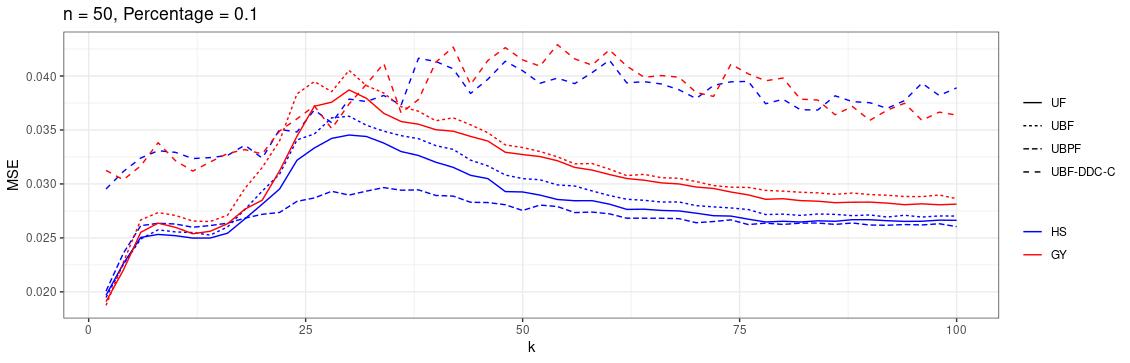}
\includegraphics[width=0.9\textwidth]{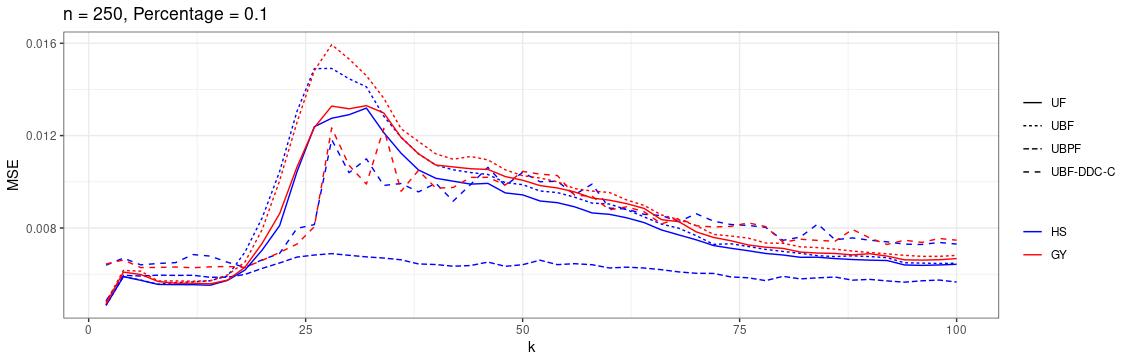}
\includegraphics[width=0.9\textwidth]{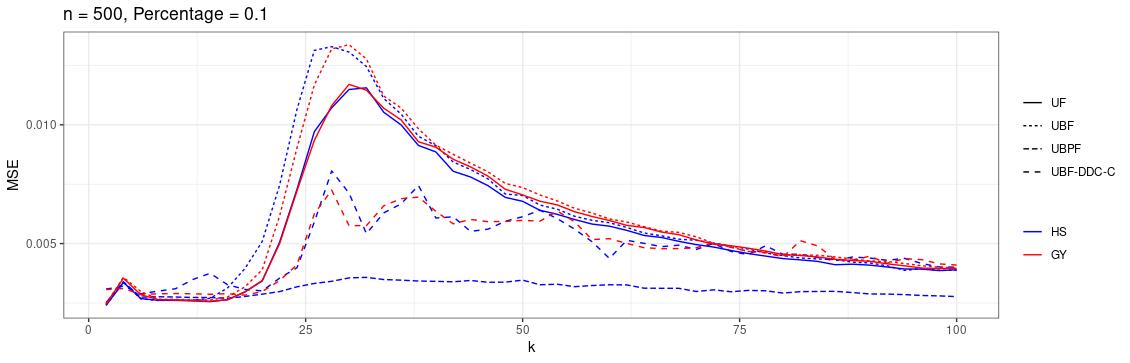}
\end{center}
\caption{Average LRT (top) and average MSE (bottom) in $0.1$ case-wise contamination level versus the contamination value $k$, for $p=5$ and $n = 50,250,500$.}
\label{fig:k-5-n-thcm-0.1}
\end{figure}

\begin{figure}
\begin{center}
\includegraphics[width=0.49\textwidth]{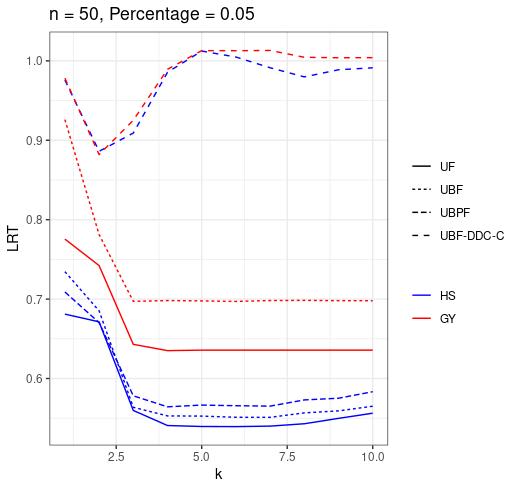}
\includegraphics[width=0.5\textwidth]{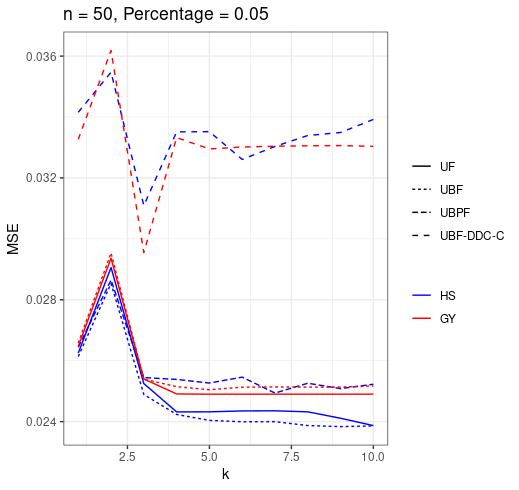}
\includegraphics[width=0.49\textwidth]{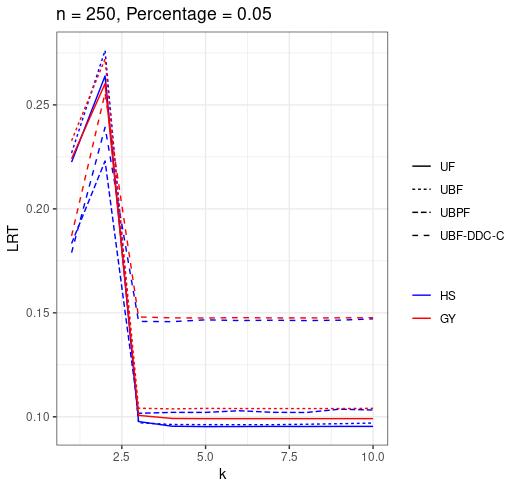}
\includegraphics[width=0.5\textwidth]{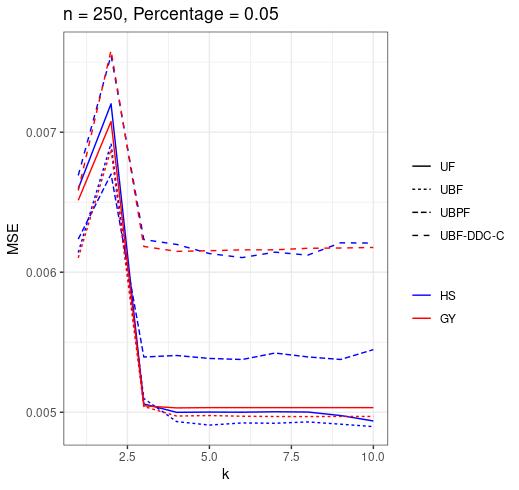}
\includegraphics[width=0.49\textwidth]{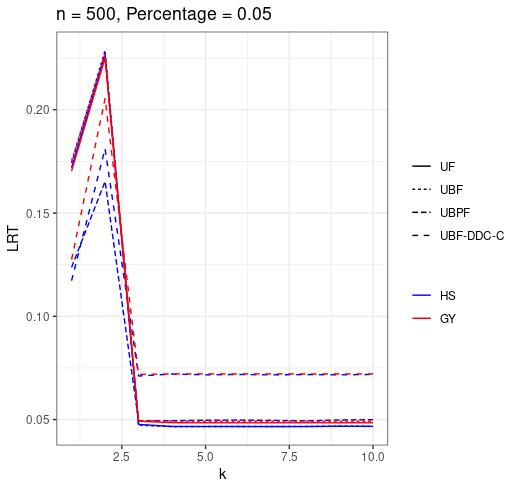}
\includegraphics[width=0.5\textwidth]{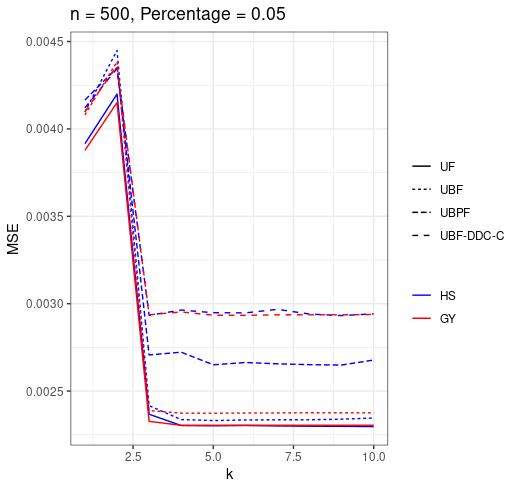}
\end{center}
\caption{Average LRT (left) and average MSE (right) in $0.05$ cell-wise contamination level versus the contamination value $k$, for $p=5$ and $n = 50,250,500$.}
\label{fig:k-5-n-icm-0.05}
\end{figure}

Table \ref{tab:max_mse_cell} and Table \ref{tab:max_mse_case} show the maximum average MSE under cell-wise and case-wise contamination, respectively. The values in the tables are the MSE values multiplied by 100 for a better visualization and model comparison. Under case-wise contamination, the GY-UBF-DDC-C and HS-UBPF-DDC-C outperform the other filters, and have also competitive results for cell-wise contamination. In Figure \ref{fig:k-30-icm-0.05} (right) and Figure \ref{fig:k-30-thcm-0.1} (bottom), the average MSE versus different contamination values $k$ are displayed, with $p=30$ and $0.05$ of cell-wise contamination and $0.1$ of case-wise contamination, respectively.

The results given by the mixed contamination scenario do not show any additional information and they are not reported.

Finally, Figure \ref{fig:k-5-n-thcm-0.1} and Figure \ref{fig:k-5-n-icm-0.05} show the average LRT and average MSE with respect to different value of $k$, for $10\%$ of case-wise contamination and $5\%$ of cell-wise contamination, respectively, for $p=5$ and different number of observations $n$. For increasing $n$, the filters perform better showing smaller average LRT and average MSE values. In particular, depth-filters present better improvements in case of case-wise contamination and they seem to perform better then those in combination with DDC.  

In a second Monte Carlo experiment, we use the location-scale family of multivariate Student's $t$-distribution with $5$ degrees of freedom as reference distribution $F$. We consider two data generation processes: in the first case data are simulated from the multivariate Normal distribution and in the second case data are simulated from a $t_5$ distribution with $5$ degrees of freedom. Apart from this, the setup of the experiment is the same of the previous one. The construction of the half-space-filter for this case follows directly from the definition given in equation (\ref{eqn:dn-halfspace}), with just one change. In particular, since the $t$ distribution belongs to the family of elliptically symmetric distribution, equation (\ref{eqn:hs-elliptically}) holds and it is used to compute the theoretical depth. On the other hand, the sample depth is again computed using the random Tukey depth. Complete results are not reported. In this new setup, the HS-filters are still competitive for casewise contamination, while they outperform the GY-filters in case of cellwise contamination. This performance does not change if observations are sampled from a Normal distribution or a $t$-distribution.

\section{Examples}
\label{sec:example}

In Subsection \ref{subsec:example-sn}, we illustrate how depth-filters approach can be used in models different from the location and scatter model with elliptical contours. In particular, we provide details of applying such filters to multivariate Skew-Normal distributions. A real-data application is reported  in Subsection \ref{subsec:example-small-cap}. The \texttt{R} package \texttt{GSEdepth}, available as supplementary material, implements the new procedures and contains the used data set. 

\subsection{\textbf{Multivariate Skew-Normal distributions}}
\label{subsec:example-sn}


In this example we consider a $p$-multivariate Skew-Normal random variable $\vect{X} \sim SN_p(\vect{\xi},\mat{\Omega},\vect{\alpha})$, with a location parameter $\vect{\xi}$, a positive definite scatter matrix $\mat{\Omega}$, and a skewness vector parameter $\vect{\alpha}$. We point out the reader to \citet{Azzalini2014} for the details on multivariate Skew-Normal distributions. The mean vector $\vect{\mu}$ and the covariance matrix $\mat{\Sigma}$ do not coincide with the distribution parameters, however they are easily evaluated as \citep[][formulas 2.27, 5.31 and 5.32]{Azzalini2014}
\begin{equation*}
  \vect{\mu} = \mathbb{E}(\vect{X}) = \vect{\xi} + \omega \vect{\nu}, \quad \Sigma = \mathbb{C}\mathrm{ov}(\vect{X}) = \mat{\Omega} - \omega \nu \nu^\top \omega \ ,
\end{equation*}
where $\nu = \sqrt{\frac{2}{\pi}} (1 + \vect{\alpha}^\top \bar{\Omega} \vect{\alpha})^{-1/2} \bar{\Omega} \vect{\alpha}$ while $\bar{\Omega}$ and $\omega$ are, respectively, the correlation matrix obtained from $\Omega$ and a diagonal matrix with the square-root of the diagonal elements of $\Omega$.  
We are going to apply the GY-filter and the HS-filter in this framework, using as reference distribution the skew-normal model, evaluated at the true parameters value. Subsection SM--6-1 of the Supplementary Material provides all the necessary code to replicate the results and the figures.

A sample of size $n = 100$ is obtained and it is represented in Figure \ref{fig:sn-contour} (blue crosses) together with the density contours (black dotted lines) and the half-space depth contours (red dashed lines).
\begin{figure}
\begin{center}
\includegraphics[width=0.7\textwidth]{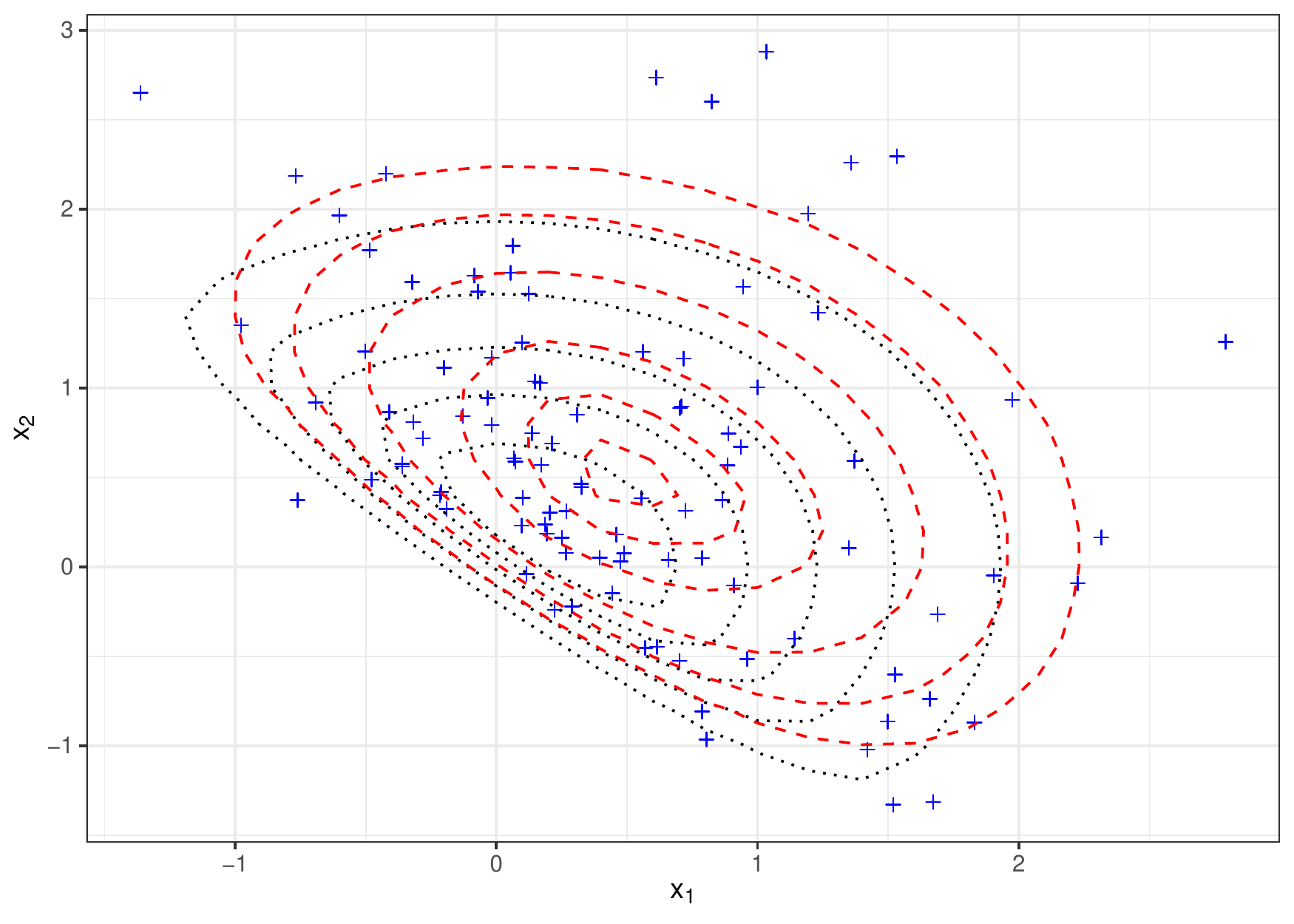}
\end{center}
\caption{Contour plot of the density of the skew-normal (black dotted lines) and of the half-space depth (red dashed lines). Sample observations are blue crosses.} 
\label{fig:sn-contour}
\end{figure}

The GY-filters, that are based on Mahalanobis distances, need the mean vector and the variance-covariance matrix to be computed. The half-space-depth filters work directly with the actual parametrization of the reference distribution. While the set $C^\beta(F)$ is always an ellipse for GY-filters, this is not the case for half-space-depth filters, which, instead, depends on the shape of the reference distribution, and in this case it is able to take into account the asymmetry of the Skew-Normal distribution.

We are going to add artificially $20$ outlying observations sampled from a $N_2((-0.2,-0.25), 0.01 \mat{I}_2)$ in an iterative procedure. Note that, these points, with high probability, lie inside the boundary set given by the Mahalanobis distance but outside the boundary set computed using the half-space depth. This position is clearly crucial, however it is a region of low density according to the true model. In each iteration an outlier is added to the data set and the number of flagged observations $n_0$ is computed and reported in Table \ref{tab:n0-sn}. The GY-filter is insensitive to this kind of outliers, indeed, the number of detected cells is stable or decreases as the number of added outliers increases. Vice versa, the number of detected cells by the HS-filter is almost always equal to the amount of added outliers.

\begin{table}[ht]
\centering
\begin{tabular}{rrrrrrrrrrr}
  \hline
$n^o$ of outliers & 1 & 2 & 3 & 4 & 5 & 6 & 7 & 8 & 9 & 10  \\ 
  \hline
GY-filter & 4 & 4 & 4 & 4 & 4 & 4 & 4 & 4 & 4 & 4  \\ 
  HS-filter & 3 & 3 & 3 & 3 & 4 & 5 & 6 & 7 & 8 & 9  \\ 
   \hline
$n^o$ of outliers & 11 & 12 & 13 & 14 & 15 & 16 & 17 & 18 & 19 & 20 \\ 
  \hline
GY-filter & 4 & 4 & 4 & 3 & 3 & 3 & 3 & 3 & 3 & 3 \\ 
  HS-filter & 10 & 11 & 12 & 13 & 14 & 15 & 15 & 17 & 18 & 19 \\ 
   \hline
\end{tabular}
\caption{Number of flagged observations by the GY-filter and the HS-filter for increasing number of added outliers placed at $N_2((-0.2,-0.25), 0.01 \mat{I}_2)$.}
\label{tab:n0-sn}  
\end{table}

\begin{figure}[ht]
\begin{center}
\includegraphics[width=0.7\textwidth]{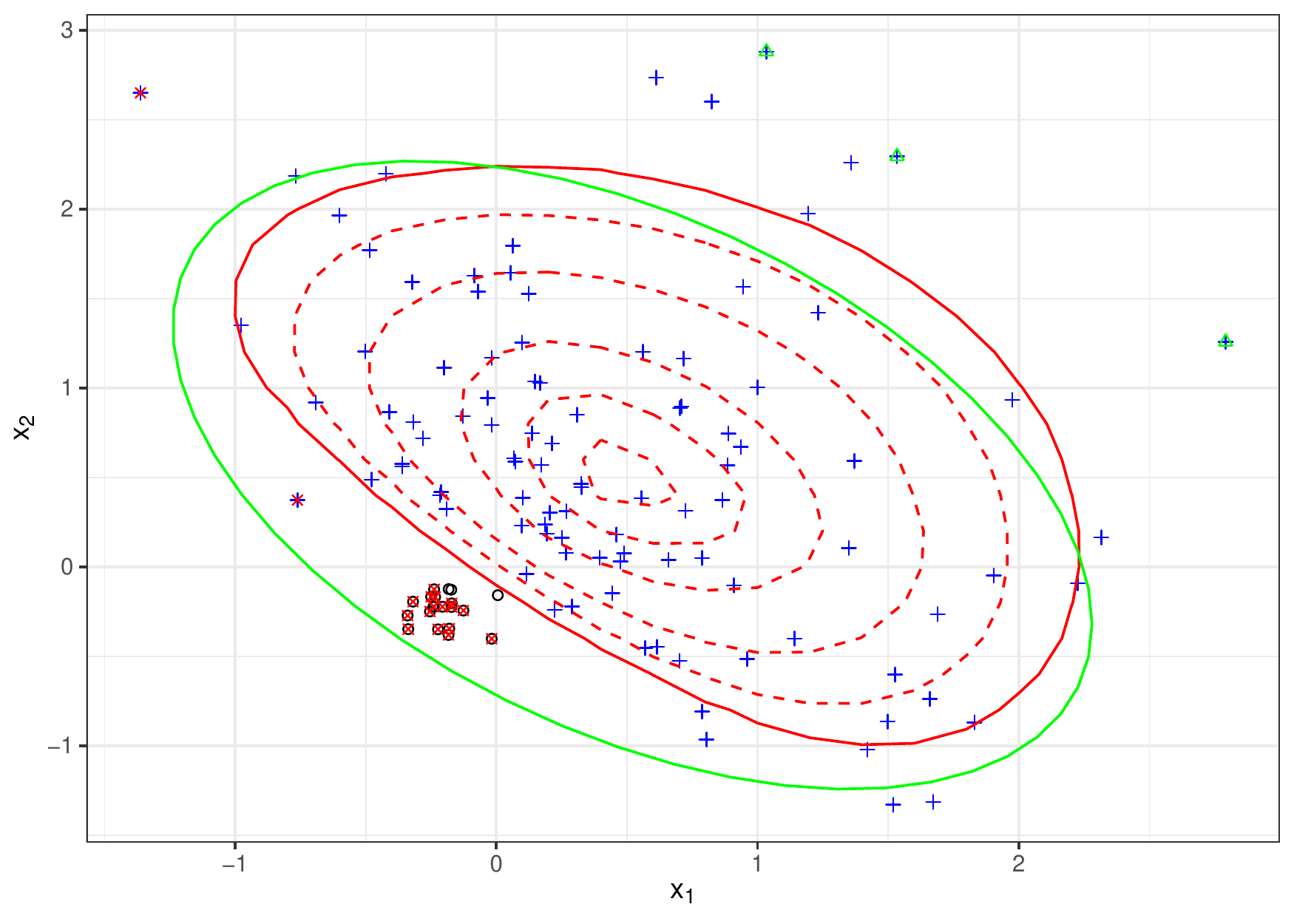}
\end{center}
\caption{$C^\beta(F)$ based on GY-filter is in solid green while for HS-filter is in solid red. Half-space depth contours are red dashed lines and sample observations are blue crosses. The $20$ added outliers are black circles. Observations flagged by the GY-filter are green triangles, while those flagged by the HS-filter are red crosses. Outliers are placed at $N_2((-0.2,-0.25), 0.01 \mat{I}_2)$.} 
\label{fig:outliers-sn}
\end{figure}

In this simulation, we are also interested in identifying such flagged points. Figure \ref{fig:outliers-sn} shows the added outliers at the final iteration (as black circles). Observations flagged by the HS-filter are red crosses, while those flagged by the GY-filter are green triangles. The HS-filter correctly identify the majority of the added cells, while these are never detected by the GY-filter. Indeed, GY-filter flags regular observations which lead to a more symmetric empirical distribution.

In a second experiment we sampled the added outliers from $N_2((-0.5,-0.6), 0.01 \mat{I}_2)$, so that, with high probability, the outliers lie in a region outside the boundary set given by the Mahalanobis distance. While in this case the GY-filter flags the right amount of observations (see Table \ref{tab:n0-sn-2}) most of them do not belong to the set of added outliers. The only effect is, again, to reduce the asymmetry of the observed empirical distribution. Figure \ref{fig:outliers-sn-2} shows the flagged observations after $10$ added outliers (left panel) and at the final step (right panel).

\begin{table}[h]
\centering
\begin{tabular}{rrrrrrrrrrr}
  \hline
$n^o$ of outliers & 1 & 2 & 3 & 4 & 5 & 6 & 7 & 8 & 9 & 10  \\ 
  \hline
  GY-filter & 4 & 5 & 5 & 6 & 6 & 7 & 8 & 9 & 10 & 10 \\ 
  HS-filter & 3 & 3 & 3 & 4 & 5 & 6 & 6 & 7 & 8 & 8 \\ 
   \hline
$n^o$ of outliers & 11 & 12 & 13 & 14 & 15 & 16 & 17 & 18 & 19 & 20 \\ 
  \hline
  GY-filter & 11 & 12 & 13 & 14 & 15 & 16 & 17 & 18 & 19 & 20  \\ 
  HS-filter & 9 & 10 & 11 & 12 & 13 & 13 & 14 & 15 & 16 & 17  \\ 
   \hline
\end{tabular}
\caption{Number of flagged observations by the GY-filter and the HS-filter for increasing number of added outliers placed at $N_2((-0.5,-0.6), 0.01 \mat{I}_2)$.}
\label{tab:n0-sn-2}  
\end{table}

\begin{figure}[htbp]
\begin{center}
  \includegraphics[width=0.49\textwidth]{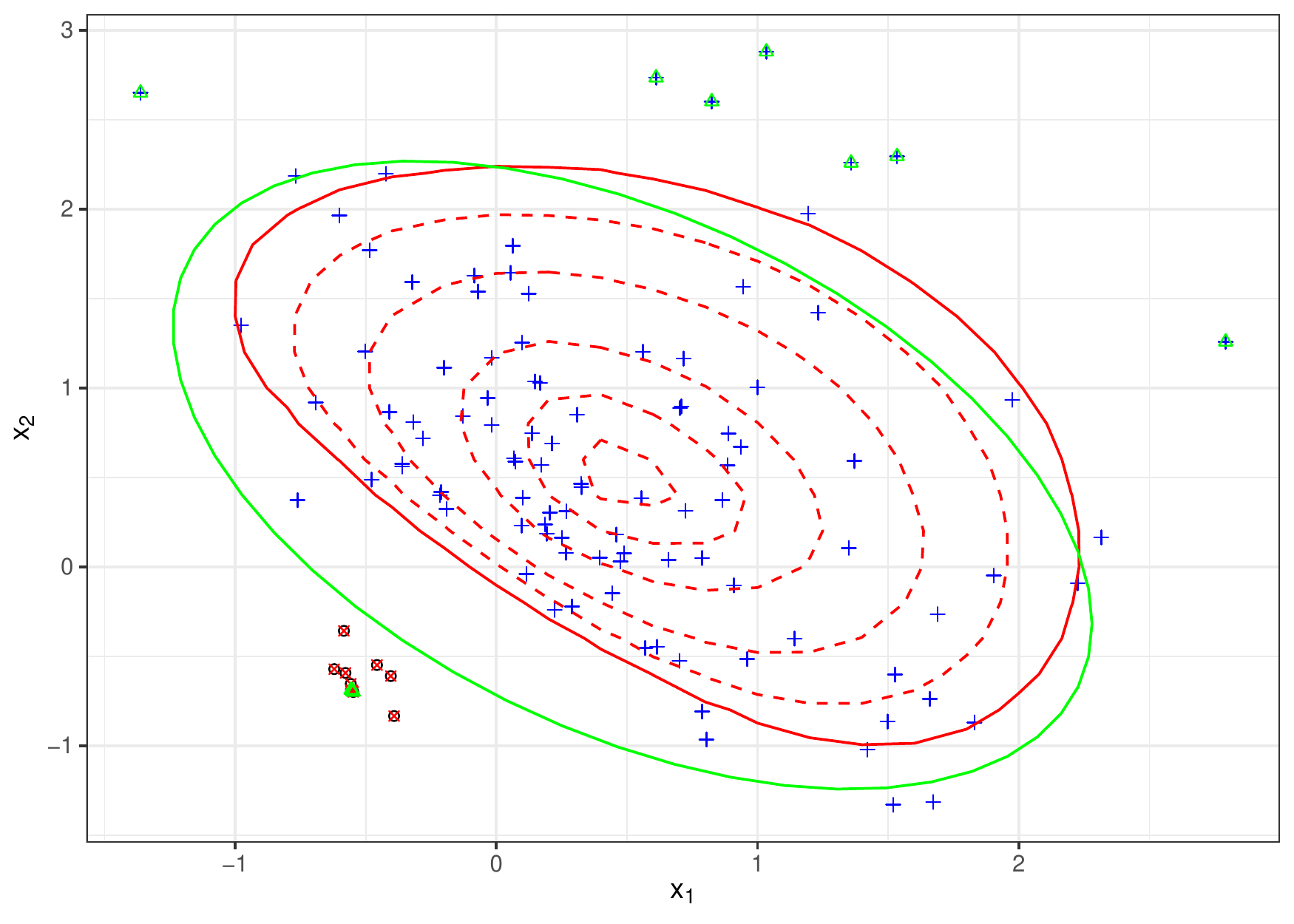}
  \includegraphics[width=0.49\textwidth]{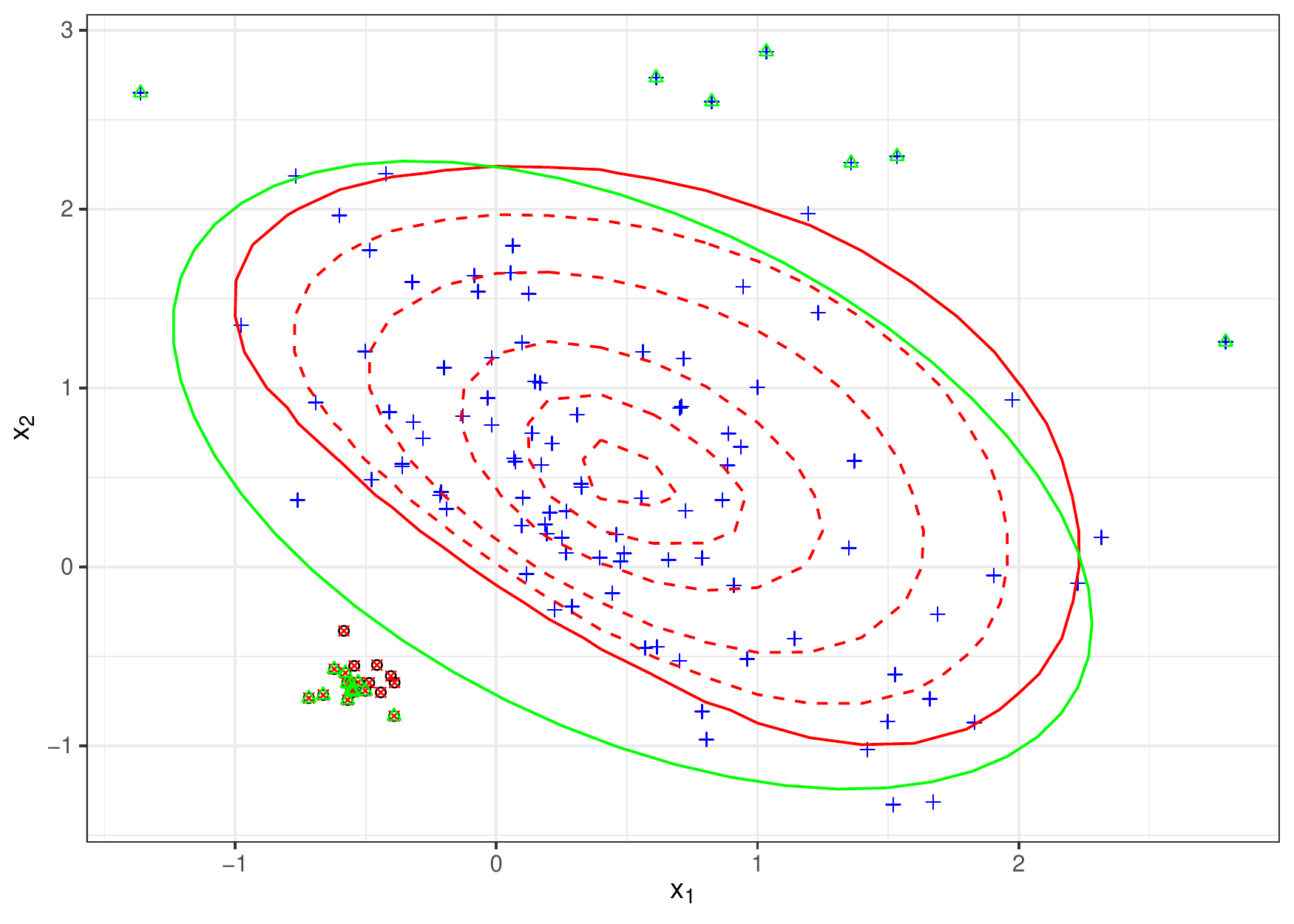}
\end{center}
\caption{$C^\beta(F)$ based on GY-filter is in solid green while for HS-filter is in solid red. Half-space depth contours are red dashed lines and sample observations are blue crosses. The added outliers are black circles. Observations flagged by the GY-filter are green triangles, while those flagged by the HS-filter are red crosses. Outliers are placed at $N_2((-0.5,-0.6), 0.01 \mat{I}_2)$. Left panel: $10$ added outliers, right panel: $20$ added outliers.} 
\label{fig:outliers-sn-2}
\end{figure}

\subsection{Small-cap Stock Returns}
\label{subsec:example-small-cap}

We consider the weekly returns from $01/01/2008$ to $12/28/2010$ for a portfolio of 20 small-cap stocks from \citet{Martin2013}. The data set is publicly available at the link "http://www.bearcave.com/finance/smallcap weekly.csv" and can be found in the \texttt{R} package \texttt{GSEdepth}. Subsection SM--6-2 of the Supplementary Material provides the necessary code to replicate the results and the figures.

\begin{figure}[htb!]
\begin{center}
\includegraphics[width=\textwidth]{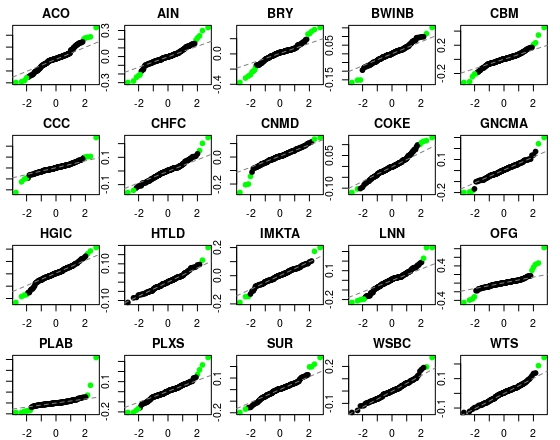}
\end{center}
\caption{Small-cap stock returns. QQ-plots of the variables, green: observations marked as outliers}
\label{fig:scsr-qqplots}
\end{figure}

With this example we want to compare the filter introduced in \citet{Agostinelli2015a} and the same filter with the improvements proposed in \citet{Zamar2017} to the presented filter based on statistical data depth functions obtained using the half-space depth .   

Figure \ref{fig:scsr-qqplots} shows the normal QQ-plots of the 20 variables. The returns in all stocks seem to roughly follow a normal distribution, but with the presence of large outliers. The returns in each stock that lie 3 MAD's away from the coordinate-wise median are displayed in green in the figure. These indicated cells, which are considered cell-wise outliers, correspond to the $4.4\%$ of the total cells and they propagate to $37.6\%$ of the cases.

\begin{figure}
\begin{center}
\includegraphics[width=\textwidth]{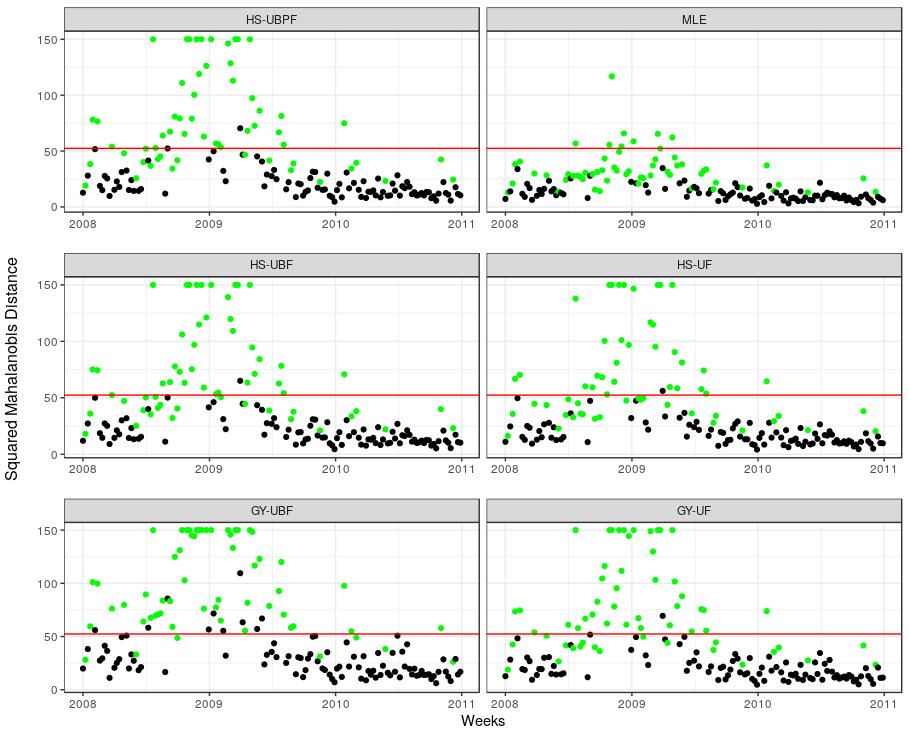}
\end{center}
\caption{Squared Mahalanobis distances of the weekly returns based on the MLE, the GY filters (GY-UF, GY-UBF) and the filters based on half-space depth (HS-UF, HS-UBF, HS-UBPF). Observations with one or more cells flagged as outliers are displayed in green. Large Mahalanobis distance are truncated for a better visualization.}
\label{fig:md}
\end{figure}

Figure \ref{fig:md} shows the squared Mahalanobis distances (MDs) of the weekly returns based on the estimates given by the MLE, the GY-UF, the GY-UBF, the HS-UF, the HS-UBF and the HS-UBPF. Observations with one or more cells flagged as outliers are displayed in green. We say that the estimate identifies an outlier correctly if the MD exceeds the $99.99\%$ quantile of a chi-squared distribution with 20 degrees of freedom. We see that the MLE estimate does a very poor job recognizing only 8 of the 59 cases. The GY-UF, HS-UF, HS-UBF and HS-UBPF show a quite similar behavior, doing better then the MLE but they miss about one third of the cases. The GY-UBF identifies all but seven of the cases. 

Figure \ref{fig:md-ddc} shows the Mahalanobis distances produced by GY-UBF-DDC-C and HS-UBPF-DDC-C. Here, we can see that the GY-UBF-DDC-C misses 13 of 59 cases while the HS-UBPF-DDC-C has missed 12 cases. Although they seem not to do a better job, these two filters are able to flag some other observations, not identified before, as case-wise outliers. 
\begin{figure}
\begin{center}
\includegraphics[width=\textwidth]{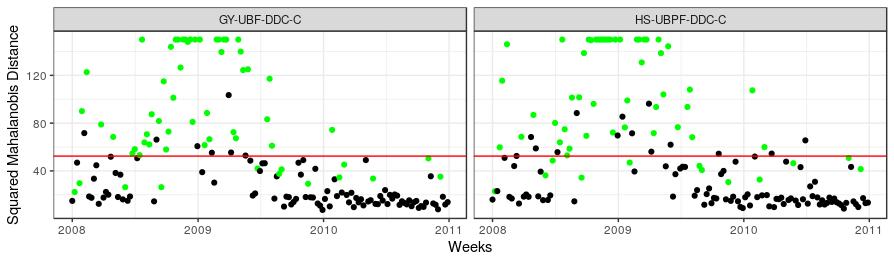}
\end{center}
\caption{Squared Mahalanobis distances of the weekly returns based on the GY-UBF-DDC-C and the corresponding filter based on half-space depth, HS-UBPF-DDC-C). Observations with one or more cells flagged as outliers are displayed in green}
\label{fig:md-ddc}
\end{figure}
\begin{figure}
\begin{center}
\includegraphics[width=0.32\textwidth]{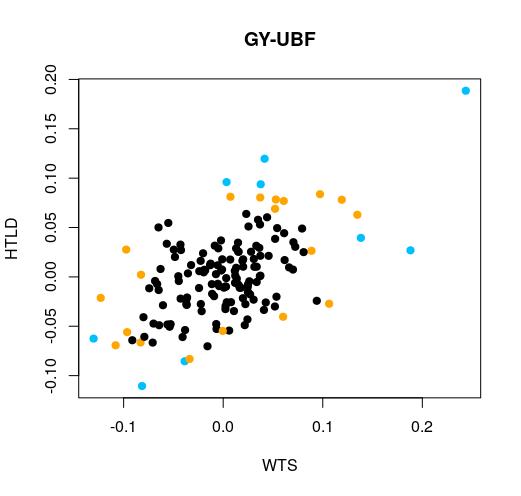}
\includegraphics[width=0.32\textwidth]{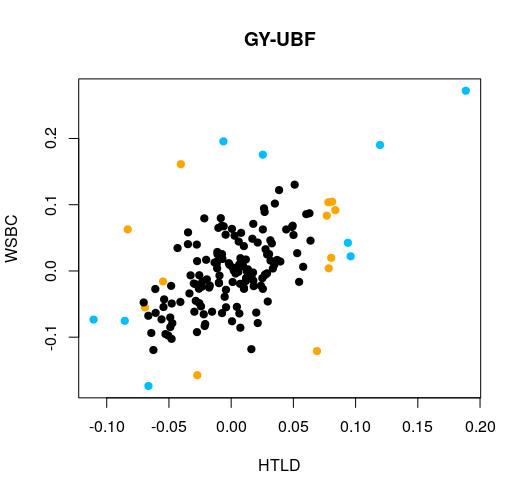}
\includegraphics[width=0.32\textwidth]{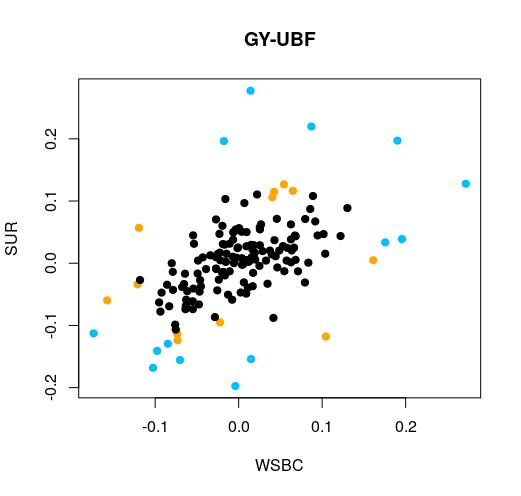} \\
\includegraphics[width=0.32\textwidth]{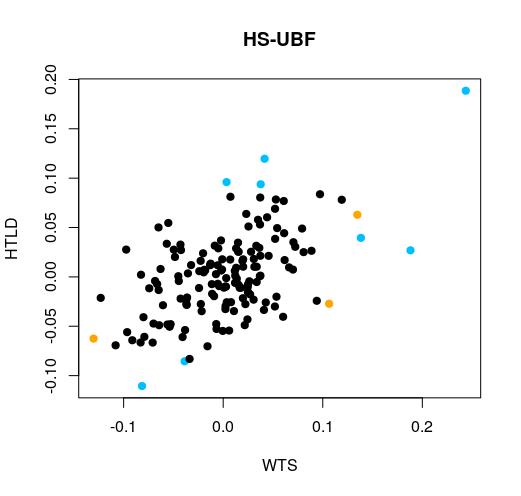}
\includegraphics[width=0.32\textwidth]{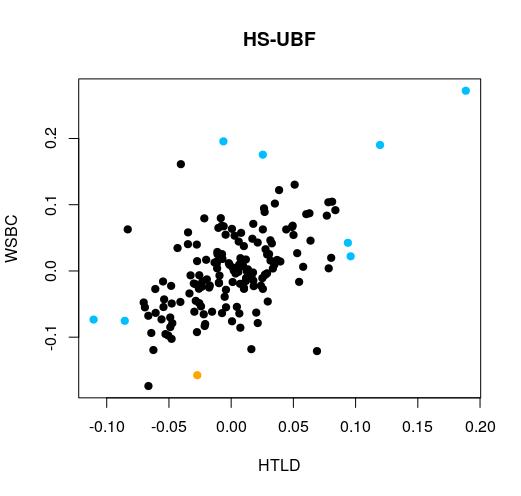}
\includegraphics[width=0.32\textwidth]{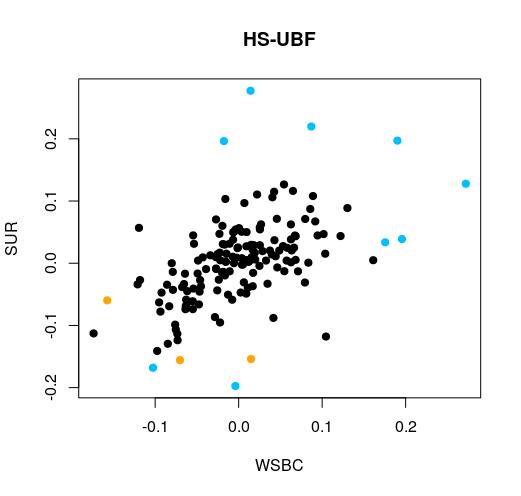}

\end{center}
\caption{Bivariate scatter plot of small-cap stock returns. In the first row the Gervini-Yohai depth is used. Blue: outliers detected by the GY-UF univariate filter; orange: outliers detected by the bivariate step of GY-UBF but not in the univariate step. In the second row the Half-space depth is used. Blue: outliers detected by the HS-UF univariate filter; orange: outliers detected by the bivariate step of HS-UBF but not in the univariate step.}  
\label{fig:scsr}
\end{figure}

Figure \ref{fig:scsr} shows the bivariate scatter plot of WTS versus HTLD, HTLD versus WSBC and WSBC versus SUR where the GY-UBF and HS-UBF filters are applied, respectively. The bivariate observations with at least one component flagged as outlier are in blue, while outliers detected by the bivariate filter, but excluded by the univariate filter, are in orange. We see that the HS-UBF identifies less outliers with respect to the GY-UBF.

\section{Conclusions}
\label{sec:conclusions}

We presented a general idea to construct filters based on statistical data depth functions, called depth-filters. We also showed that previously defined filters can be derived from our general method. We developed one filter, belonging to the family of depth-filters, using the half-space depth, namely HS-filter. Furthermore, our filter is very versatile since it is defined in general dimension $d$, $1 \le d \le p$. Indeed, considering the idea of an univariate and univariate-bivariate filter, we applied our HS-filter using both $d=1$ and $d=2$, and we proposed a new filtering procedure adding the case $d=p$, in sequence. Finally, we combined the depth-filter HS-UBPF and DDC, as suggested by \cite{Zamar2017}. After the filtering process, the generalized S-estimator was applied, following the two-step procedure introduced in \citet{Agostinelli2015a}.

The results of the simulation study show that GY-UBF and HS-UBPF, combined with DDC, outperform the other filters in the case-wise contamination scenario. However, for small $p$, HS-UBPF outdoes the other filters, even if its computational time could slightly increase, in both case-wise and cell-wise contamination and improves for increasing $n$. Finally, it is not suggested to combine any filter with DDC if cell-wise outliers are present, indeed, even if GY-UBF-DDC-C and HS-UBPF-DDC-C may show lower maximum average LRT and average MSE values, they do not have the best behaviour with respect different contamination values $k$.

Further research on this filter could be needed to explore the performance of the estimator in different types of data, for example in flat data sets (e.g., $n \approx 2p$). In addition, different statistical data depth functions could be used in place of the half-space depth to construct new filters. The choice of the appropriate statistical data depth function could be helpful to analyze different types of data.

\bibliography{depth}

\end{document}